\theoremstyle{plain}
\newtheorem{thm}{Theorem}[section]
\newtheorem{con}[thm]{Conjecture}
\newtheorem*{EffbSemCon}{Effective B-Semiampleness Conjecture}
\newtheorem*{bSemCon}{B-Semiampleness Conjecture}
\theoremstyle{definition}
\newtheorem{dfn}[thm]{Definition}
\newtheorem{rem}[thm]{Remark}
\newtheorem{ter}[thm]{Terminology}
\newtheorem{exa}[thm]{Example}
\theoremstyle{remark}
\newcommand{\N}{\mathbb{N}}
\newcommand{\C}{\mathbb{C}}
\newcommand{\R}{\mathbb{R}}
\newcommand{\Q}{\mathbb{Q}}
\newcommand{\PS}{\mathbb{P}}
\newcommand{\OO}{\mathcal{O}}
\newcommand{\can}{\mathrm{can}}
\DeclareMathOperator{\rk}{rk}
\DeclareMathOperator{\codim}{codim}
\DeclareMathOperator{\Sing}{Sing}
\DeclareMathOperator{\Exc}{Exc}
\DeclareMathOperator{\Supp}{Supp}
\DeclareMathOperator{\ddiv}{div}
\begin{document}
\title[A travel guide to the canonical bundle formula]{A travel guide to\\ the canonical bundle formula}

\author{Enrica Floris}
\address{Universit\'e de Poitiers, Laboratoire de Math\'ematiques et Applications,\linebreak UMR~CNRS 7348, T\'el\'eport 2, Boulevard Marie et Pierre Curie, BP 30179, 86962 Futuroscope Chasseneuil Cedex, France}
\email{enrica.floris@univ-poitiers.fr}

\author{Vladimir Lazi\'c}
\address{Fachrichtung Mathematik, Campus, Geb\"aude E2.4, Universit\"at des Saarlandes, 66123 Saarbr\"ucken, Germany}
\email{lazic@math.uni-sb.de}

\thanks{
Lazi\'c was supported by the DFG-Emmy-Noether-Nachwuchsgruppe ``Gute Strukturen in der h\"oherdimensionalen birationalen Geometrie". During the initial stage of this project Floris was funded by the Max Planck Institute for Mathematics in Bonn. We would like to thank A.\ Golota for pointing out the reference \cite{FF17} and for useful conversations.
}

\begin{abstract}
We survey known results on the canonical bundle formula and its applications in algebraic geometry.
\end{abstract}

\maketitle
\setcounter{tocdepth}{1}
\tableofcontents

\section{Introduction}

The Minimal Model Program (MMP) predicts that every projective pair with mild singularities is birationally built out of three classes of pairs: those whose log canonical classes are ample, numerically trivial or anti-ample.

More precisely, let $(X,\Delta)$ be a log canonical pair. Then there should exist a birational contraction $\varphi\colon (X,\Delta)\dashrightarrow (X_{\min},\Delta_{\min})$ together with a fibration $f\colon (X_{\min},\Delta_{\min})\to X_\can$ so that $K_{X_{\min}}+\Delta_{\min}\sim_\Q f^*A$, for a suitable ample $\Q$-divisor $A$ on $X_\can$. Note that the Iitaka dimension of $K_X+\Delta$ restricted to a general fibre of the composed map $f\circ\varphi$ is zero.

It is a natural and important question to determine whether singularities of the MMP are preserved in this process. The singularities of $(X_{\min},\Delta_{\min})$ are the same as those of $(X,\Delta)$. On the other hand, it remains an open problem whether there exists a boundary divisor $\Delta_\can$ on $X_\can$ such that $(X_\can,\Delta_\can)$ is log canonical and $K_{X_{\min}}+\Delta_{\min}\sim_\Q f^*(X_\can+\Delta_\can)$; in other words, whether singularities of $(X,\Delta)$ \emph{descend} to the canonical model $X_\can$.

When the singularities of $(X,\Delta)$ are klt, it is known by a work of Ambro and Kawamata that such a divisor exists; this result has already had numerous consequences in birational geometry. However, the proof is not constructive: more precisely, one loses control of the coefficients of $\Delta$ at the last step. It is desirable that the singularities of $(X_\can,\Delta_\can)$ reflect in a canonical way the singularities of $(X,\Delta)$.

In general, with notation as above, it is known that 
$$A\sim_\Q K_{X_\can}+B_{X_\can}+M_{X_\can},$$ 
where $B_{X_\can}$ -- the \emph{discriminant} -- is closely related to the singularities of $f$, and the divisor $M_{X_\can}$ -- the \emph{moduli divisor} -- conjecturally carries information on the birational variation of the fibres of $f$. A formula of this form is called the \emph{canonical bundle formula}. 

This paper is an attempt to give an account of all the known results on the canonical bundle formula, and to serve as a guide to those wishing to study this important subject.

\section{Lc-trivial fibrations}

We work over $\C$. We denote by $\equiv$, $\sim$ and $\sim_\Q$ the numerical, linear and $\Q$-linear equivalence of divisors respectively. 

For a Weil $\Q$-divisor $D=\sum d_i D_i$, for a real number $r$ we denote  $D^{\leq r}:=\sum_{d_i\leq r}d_i D_i$. If $f\colon X\to Y$ is a proper surjective morphism between normal varieties and $D$ is a Weil $\R$-divisor on $X$, then $D_v$ and $D_h$ denote the vertical and the horizontal part of $D$ with respect to $f$. In this setup, we say that $D$ is \emph{$f$-exceptional} if $\codim_Y\Supp f(D)\geq2$.

In this section we introduce the main topic of this survey -- \emph{lc-trivial fibrations}. In this section we define them and give some examples which will accompany us through the paper.

We first need to introduce singularities of pairs. This is by now a standard topic in higher dimensional birational geometry, and good references are \cite{KM98} and \cite{Kol13}.

A \emph{pair} $(X,\Delta)$ consists of a normal variety $X$ and a Weil $\Q$-divisor $\Delta$ such that $K_X+\Delta$ is $\Q$-Cartier. A pair $(X,\Delta)$ is \emph{log smooth} if $X$ is smooth and the support of $\Delta$ is a simple normal crossings divisor.

A \emph{log resolution} of a pair $(X,\Delta)$ is a birational morphism $f\colon Y\to X$ such that the exceptional locus $\Exc(f)$ is a divisor and the pair $\big(Y,\Supp(f_*^{-1}\Delta+\Exc(f)\big)$ is log smooth.

If $(X,\Delta)$ be a pair and if $\pi\colon Y\to X$ is a birational morphism with $Y$ is normal, we can write
$$K_Y\sim_\Q \pi^*(K_X+\Delta)+\sum a(E_i,X,\Delta)\cdot E_i,$$
where $E_i\subseteq Y$ are distinct prime divisors and the numbers $a(E_i,X,\Delta)\in\Q$ are called \emph{discrepancies}. The order of vanishing at the generic point of each $E_i$ defines a \emph{geometric valuation} on $\C(X)$. The pair $(X,\Delta)$ is \emph{klt}, respectively \emph{log canonical}, if $a(E,X,\Delta)>-1$, respectively $a(E,X,\Delta) \geq -1$, for every geometric valuation $E$ over $X$. 

Much of what we say in this paper can be generalised to pairs $(X,\Delta)$, where $\Delta$ is allowed to have real coefficients. We stick to rational divisors mostly for reasons of clarity and simplicity.

\subsection{Definition and first examples}

The objects for which we can write a canonical bundle formula are called \emph{lc-trivial fibrations}. 

\begin{dfn}\label{dfn:lctrivial}
Let $(X,\Delta)$ be a pair. A morphism $f \colon (X,\Delta) \rightarrow Y$ to a normal projective variety $Y$ is a \emph{klt-trivial}, respectively \emph{lc-trivial}, fibration if:
\begin{enumerate}
\item[(a)] $f$ is a surjective morphism with connected fibres,
\item[(b)] $(X,\Delta)$ has klt, respectively log canonical, singularities over the ge\-ne\-ric point of $Y$,
\item[(c)] there exists a $\Q$-Cartier $\Q$-divisor $D$ on $Y$ such that
$$K_X+\Delta\sim_\Q f^*D,$$
\item[(d)] there exists a log resolution $\pi'\colon X'\rightarrow X$ of $(X,\Delta)$ such that, if $\mathcal E$ is the set of all geometric valuations over $X$ which are defined by a prime divisor $E$ on $X'$ such that $a(E,X,\Delta)>-1$, and if we denote $\Xi'=\sum\limits_{E\in\mathcal E} a(E,X,\Delta)\cdot E$, then
$$\rk (f\circ\pi')_*\OO_{X'}(\lceil \Xi'\rceil) = 1.$$
\end{enumerate}  
\end{dfn}

\begin{ter}
In \cite{Amb04}, klt-trivial fibrations as in Definition \ref{dfn:lctrivial} are called lc-trivial fibrations.
\end{ter}

\begin{rem}
We make a few comments on the condition (d) in Definition \ref{dfn:lctrivial}. For simplicity, assume that the pair $(X,\Delta)$ is klt. Note that then
$$\Xi'\sim_\Q K_{X'}-\pi'^*(K_X+\Delta).$$
The divisor $\lceil \Xi'\rceil$ is effective on the generic fibre of $f\circ\pi'$ by (b), hence $\rk (f\circ\pi')_*\OO_{X'}(\lceil \Xi'\rceil) \geq 1$, hence the point of (d) is the opposite inequality. 

The most important case to keep in mind is that when the divisor $\Delta$ is effective on the generic fibre: indeed, in that case the divisor $\lceil \Xi'\rceil$ is an effective exceptional divisor on the generic fibre of $f\circ\pi'$, and the condition (d) is immediate. However, in order to be able to study lc-trivial fibrations by applying basic operations of birational geometry in \S\ref{subsec:basechange}, it is crucial to allow divisors $\Delta$ with negative coefficients.

One more thing to notice is that if (d) holds for a log resolution $\pi'$, then it holds on any log resolution $\pi''\colon X''\to X$ which factors through $\pi'$. Define the divisor $\Xi''$ on $X''$ analogously as in Definition \ref{dfn:lctrivial}, and let $\theta\colon X''\to X'$ be the induced morphism. Since $X'$ and $X''$ are smooth, there exists an integral effective divisor $E$ such that $K_{X''}\sim \theta^*K_{X'}+E$. Thus,
\begin{align*}
f_*\pi''_*\OO_{X''}(\lceil \Xi''\rceil)&=f_*\pi''_*\OO_{X''}(\lceil \theta^*\Xi'+E\rceil)=f_*\pi''_*\OO_{X''}(\lceil \theta^*\Xi'\rceil+E)\\
&\subseteq f_*\pi''_*\OO_{X''}(\theta^*\lceil \Xi'\rceil+E)=f_*\pi'_*\OO_{X'}(\lceil \Xi'\rceil),
\end{align*}
where we used that $\lceil \theta^*\Xi'\rceil\leq \theta^*\lceil \Xi'\rceil$. Therefore, $\rk (f\circ\pi'')_*\OO_{X''}(\lceil \Xi''\rceil)=1$.

In general, one can show that (d) is independent of the choice of the resolution by using \cite[Lemma 2.7]{Fuj10}.
\end{rem}

Now we can formulate the canonical bundle formula associated to an lc-trivial fibration.

\begin{dfn}\label{dfn:cbf}
Let $f\colon (X,\Delta)\to Y$ be an lc-trivial fibration such that $K_X+\Delta\sim_\Q f^*D$ for some $\Q$-Cartier $\Q$-divisor $D$ on $Y$. If $P\subseteq Y$ is a prime divisor, the \emph{log canonical threshold} of $f^*P$ with respect to $(X,\Delta)$ is
$$\gamma_P=\sup\{t\in\R\mid (X,\Delta+tf^*P) \textrm{ is log canonical over the generic point of } P\}.$$
The condition that $(X,\Delta+tf^*P)$ is \emph{log canonical over the generic point of $P$} means that for every geometric valuation $E$ over $X$ which surjects onto $P$, we have $a(E,X,\Delta+tf^*P)\geq-1$. 
The \emph{discriminant} of $f$ is
$$\textstyle B_Y=\sum_P(1-\gamma_P)P.$$
Fix $\varphi\in\C(X)$ and the smallest positive integer $r$ such that $K_X + \Delta +\frac{1}{r}\ddiv\varphi = f^*D$. Then there exists a unique Weil $\Q$-divisor $M_Y$, the \emph{moduli part} of $f$, such that 
$$\textstyle K_X + \Delta +\frac{1}{r}\ddiv\varphi = f^*(K_Y+B_Y+M_Y).$$
This formula is the \emph{canonical bundle formula} associated to $f$.
\end{dfn}

\begin{rem}
The definition of the discriminant as above first appeared in \cite[Theorem 2]{Kaw98}. The discriminant is a Weil $\Q$-divisor on $Y$, and it is effective if $\Delta$ is effective. We notice that the discriminant is defined as an actual divisor, while the moduli part is defined only up to $\Q$-linear equivalence: it depends on the choice of $D$. Further, if $G$ is a $\Q$-divisor on $Y$, then $f\colon (X,\Delta+f^*G)\to Y$ is an lc-trivial fibration with discriminant $B_Y+G$ and moduli divisor $M_Y$.

Note that if we are interested in proving properties of the moduli divisor of an lc-trivial fibrations as above, we may always assume that the pair $(X,\Delta)$ is log canonical by \cite[Remark 3.6]{FL18}, although we may not assume that $\Delta$ is effective.
\end{rem}

\begin{exa}
Assume that $X$ is smooth, that $\Delta=0$, that $Y$ is a curve, that $f^{-1}(P)$ is smooth and that $f^*P=mf^{-1}(P)$ is a multiple fibre. Then $\gamma_P=\frac{1}{m}$.
\end{exa}

\begin{exa}\label{ellfibr}
This example is historically the first example of a canonical bundle formula. Let $f\colon X\to C$ be an elliptic fibration, that is, $X$ is a smooth surface, $C$ is a smooth curve and a general fibre of $f$ is a smooth elliptic curve. We assume furthermore that $f$ is relatively minimal: there are no $(-1)$-curves contained in the fibres of $f$. Kodaira in \cite[Theorem 6.2]{Ko60} classified the singular fibres of $f$. Kodaira's canonical bundle formula reads as
$$K_X\sim f^*(K_C+B_C+M_C),$$
where $B_C$ is defined in terms of the classification of the singular fibres and by \cite{Ko60, Ue73} we have $12M_C=j^*\mathcal{O}_{\PS^1}(1)$, with $j\colon C\to \PS^1$ being the $j$-invariant. For a detailed account on Kodaira's canonical bundle formula see \cite[Chapter V, \S 7--\S13]{BPV}.
\end{exa}

\begin{exa}
Let $X=\PS^1\times\PS^1$ and let $D$ be a reduced divisor of bidegree $(d,k)$ with $d\geq 2$. Let $\Delta=\frac{2}{d}D$ and let $f\colon (X,\Delta)\to \PS^1$ be the projection onto the second factor. Then $f$ is an lc-trivial fibration. Indeed, $K_X+\Delta$ has bidegree $(-2,-2)+\frac{2}{d}(d,k)=(0,-2+\frac{2k}{d})$ and therefore is the pullback of a divisor from $\PS^1$.
\end{exa}


\subsection{Base change property}\label{subsec:basechange}

In this subsection we investigate how canonical bundle formulas behave under base change. This will help improve the properties of the moduli part of a canonical bundle formula, at least on a sufficiently high birational model.

If $f \colon(X,\Delta)\to Y$ is a klt-trivial fibration (respectively lc-trivial), and if we consider a base change diagram
\begin{equation}\label{eq:diag}
\begin{gathered}
\xymatrix{
(X',\Delta') \ar[r]^\tau \ar[d]_{f'} & (X,\Delta)\ar[d]^{f}\\
Y' \ar[r]_{\rho}&Y,
}
\end{gathered}
\end{equation}
where $\rho$ is a proper generically finite morphism, $X'$ is the normalisation of the fibre product and $\Delta'$ is defined so that we have
$$K_{X'}+\Delta'=\tau^*(K_X+\Delta),$$
then $f' \colon(X',\Delta')\to Y'$ is also a klt-trivial (respectively lc-trivial) fibration. In the rest of the paper, we implicitly refer to this klt-trivial fibration when writing $M_{Y'}$ and $B_{Y'}$ for the moduli part and discriminant.

If $\rho$ is birational, then $\rho_*M_{Y'}=M_Y$ and $\rho_*B_{Y'}=B_Y$; in other words, these collections of divisors form \emph{b-divisors} see for instance \cite[Section 1.2]{Amb04}. 

The following is the \emph{base change property} of canonical bundle formulas.

\begin{thm}\label{nefness}
Let $f \colon(X,\Delta)\to Y$ be an lc-trivial fibration. Then there exists a proper birational morphism $Y'\to Y$ such that for every proper birational morphism $\pi \colon Y''\to Y'$ we have:\begin{enumerate}
\item[(i)] $K_{Y'}+B_{Y'}$ is a $\Q$-Cartier divisor and $K_{Y''}+B_{Y''}=\pi^*(K_{Y'}+B_{Y'})$,
\item[(ii)] $M_{Y'}$ is a nef $\Q$-Cartier divisor and $M_{Y''}=\pi^*M_{Y'}$.
\end{enumerate}
\end{thm}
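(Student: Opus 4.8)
The plan is to reduce the statement to the fundamental result on lc-trivial fibrations, due in this generality to Ambro \cite{Amb04} (building on Kawamata \cite{Kaw98}) and in the log canonical case to Fujino--Gongyo, namely that after a suitable base change the moduli part becomes semiample, or at least nef and $\Q$-Cartier, and stabilizes as a b-divisor. Concretely, I would first invoke the fact that the discriminant $B_Y$ and the moduli part $M_Y$ are defined compatibly under birational base change (as recalled in \S\ref{subsec:basechange}), so that both determine b-divisors $\sB_Y$ and $\sB M_Y$ over $Y$; the content of the theorem is that these b-divisors descend, and that the moduli descent is nef. The divisor $\Xi'$ from Definition \ref{dfn:lctrivial}(d) and the associated sheaf control everything, and I would keep track of it throughout.

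For part (i), the key point is that $K_Y + B_Y$ behaves like the log canonical divisor of a generalized pair: one shows that there is a log resolution $Y' \to Y$ on which the discriminant b-divisor $\sB_Y$ descends, i.e. $K_{Y'} + B_{Y'}$ is $\Q$-Cartier and pulls back correctly under any further birational morphism. This follows because the coefficients $1 - \gamma_P$ computed on a sufficiently high model stabilize: the log canonical thresholds $\gamma_P$ can only change in a controlled, monotone way under blow-ups, and once $Y'$ is a log resolution of the (finitely many) relevant divisors together with the branch and ramification loci of the relevant base changes, no new discrepancy below $-1$ is created over the generic point of any prime divisor. This is essentially the statement that the discriminant b-divisor is the discrepancy b-divisor of a log canonical type pair, hence has a ``crepant'' model.

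For part (ii), I would pass to a base change $\rho \colon Z \to Y$ that is generically finite and chosen so that $f_Z \colon (X_Z, \Delta_Z) \to Z$ has semistable-in-codimension-one behaviour: by the semistable reduction theorem (Kawamata--Knudsen--Mumford) applied over codimension-one points of the base, together with a further log resolution, one arranges that over every prime divisor of $Z$ the fibration looks like a product (the discriminant becomes trivial in codimension one after the base change, up to the standard ``unipotent monodromy'' normalization). On such a model, Ambro's theorem identifies $M_Z$ up to $\Q$-linear equivalence with a pullback of the Hodge-theoretic line bundle $\det f_{Z*}\omega_{X_Z/Z}$ (more precisely, the bottom piece of the Hodge filtration of the variation of Hodge structure attached to the fibration), which is nef by Fujita--Kawamata semipositivity; crucially $M_Z$ descends to $Z$ as a genuine $\Q$-Cartier nef divisor. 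Then one descends back along $\rho$: writing $M_Y = \frac{1}{\deg\rho}\rho_* M_Z$ and using that $M_Z = \rho'^* M_Y$ after possibly blowing up, together with the projection formula, gives that $M_{Y'}$ is $\Q$-Cartier and nef on the appropriate birational model $Y'$ of $Y$, and that it is pulled back under any further $\pi \colon Y'' \to Y'$.

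The main obstacle is part (ii): establishing that the moduli b-divisor descends to a nef $\Q$-Cartier divisor, which is exactly the hard theorem of Kawamata--Ambro and requires the full strength of Hodge theory (variations of Hodge structure, the structure of degenerations via semistable reduction, and the Fujita--Kawamata semipositivity theorem for the Hodge bundle). The base change step is delicate because one must control what happens to the discriminant when passing to $Z$ and then descending back to $Y$ — in particular, that the property ``$M_{Y'}$ is a pullback from $Y'$'' is insensitive to the auxiliary generically finite cover, which uses that the moduli b-divisor is invariant under generically finite base change up to $\Q$-linear equivalence. Everything else — the $\Q$-Cartier statements, the behaviour of $B_Y$ under blow-ups, the reduction to the log canonical case via \cite[Remark 3.6]{FL18} — is comparatively formal, once the semipositivity input is in hand.
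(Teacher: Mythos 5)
Your proposal follows essentially the same route as the paper's sketch (generically finite base change to achieve unipotent monodromy in codimension one, identification of the moduli part with a Hodge-theoretic object, Fujita--Kawamata semipositivity, descent), and correctly isolates Kawamata's semipositivity theorem as the hard input. Two points, however, are missing or imprecise.

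First, you never take the cyclic cover associated to $\sqrt[r]{\varphi}$, where $\varphi\in\C(X)$ and $r$ are as in Definition~\ref{dfn:cbf}. This step is not cosmetic: for a pair $(X,\Delta)$ with $\Delta\neq0$ the sheaf $f_*\omega_{X/Y}$ of the \emph{original} fibration does not see the boundary at all, so it cannot produce the moduli part. One must first pass to the $r$-th cyclic cover $X'\to X$, over which the relevant geometric object becomes an honest relative canonical bundle, and then the relevant VHS is $R^{\dim X-\dim Y}f'_*\C_{X'}$. This is also precisely the place where condition~(d) of Definition~\ref{dfn:lctrivial} is used -- the paper stresses this, and your sketch, which carries $\Xi'$ along but never uses it, does not explain how that condition ever becomes operative.

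Second, the identification ``$M_Z\sim_\Q\det f_{Z*}\omega_{X_Z/Z}$ (more precisely, the bottom piece of the Hodge filtration)'' conflates two different objects and is not correct in general. Once the fibres have dimension $\geq2$ the Hodge bundle $f'_*\omega_{X'/Y'}$ can have rank $>1$, and $\OO_{Y'}(M_{Y'})$ is a rank-one \emph{quotient} of $f'_*\omega_{X'/Y'}$, not its determinant. Nefness then follows because $f'_*\omega_{X'/Y'}$ is the canonical extension of the bottom piece of the Hodge filtration of the unipotent VHS, whose quotients are nef by \cite[Theorem 5]{Kaw81}. With the cyclic cover inserted and ``determinant'' replaced by ``quotient'', your sketch matches the argument the paper outlines for klt-trivial fibrations; the lc-trivial case, which you only gesture at, needs the additional arguments of \cite{Kol07,FG14,Flo14a}.
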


The first version of Theorem \ref{nefness} is \cite[Theorem 2]{Kaw98}, which essentially shows the nefness of the moduli part; this is also the point of the proof where the condition (d) in Definition \ref{dfn:lctrivial} is used. Theorem \ref{nefness} has been proved in this form for klt-trivial fibrations by Ambro \cite[Theorem 0.2]{Amb04}. For lc-trivial fibrations, it was proved by Koll\'ar \cite[Theorem 8.3.7]{Kol07} and  \cite[Theorem 3.6]{FG14}, with an alternative proof in \cite{Flo14a}. 

\medskip

In the context of the previous theorem, we say that $M_{Y'}$ \emph{descends} to $Y'$, and we call any such $Y'$, where additionally $B_{Y'}$ has simple normal crossings support, an \emph{Ambro model} for $f$. 

\medskip

We give a brief sketch of the proof of the nefness of the moduli divisor in the previous theorem; very good references are \cite[Lemma 5.2]{Amb04} and especially \cite[Theorem 8.5.1 and \S8.10]{Kol07}, where many more details are given. By the proof of \cite[Theorem 8.5.1]{Kol07}, it suffices to show the claim after making a suitable generically finite base change and taking the cyclic cover of $X$ associated to $\sqrt[r]{\varphi}$. The base change as in \eqref{eq:diag} that we are aiming for is a composition of a log resolution with a Kawamata cover such that, on an open subset of $U'\subseteq Y'$ whose complement has codimension at least $2$ in $Y'$, the local systems $R^if'_*\C_{X'}|_{U'}$ have unipotent monodromies; this is the content of \cite[ 8.10.7--8.10.10]{Kol07}. We may also assume that $M_{Y'}$ is a Cartier divisor. If $f$ is a klt-trivial fibration, then by \cite[Theorem 8.5.1]{Kol07} the line bundle $\OO_{Y'}(M_{Y'})$ is a quotient of the locally free sheaf $f'_*\omega_{X'/Y'}$. Then one applies \cite[Theorem 5]{Kaw81}, which asserts that $f'_*\omega_{X'/Y'}$ is the canonical extension of the bottom piece of the Hodge filtration of $R^{\dim X-\dim Y}f'_*\C_{X'}|_{U'}$, and hence its quotients are nef by the same result. A similar statement holds also for lc-trivial fibrations. 

Recently, a stronger statement was shown in \cite[Theorem 1.1]{FF17}. The result shows that \cite[Theorem 5]{Kaw81} can be improved to show that not only any quotient of $f'_*\omega_{X'/Y'}$ is nef, but moreover, it carries a singular metric whose Lelong numbers are all zero. This is much stronger than being nef, as it implies, in particular, that the multiplier ideal associated to this metric is trivial. 

We summarise this in the following result.

\begin{thm}
Let $f \colon(X,\Delta)\to Y$ be a klt-trivial fibration. Then there exists a proper birational morphism $Y'\to Y$ such that for every proper birational morphism $\pi \colon Y''\to Y'$ we have:
\begin{enumerate}
\item[(i)] $K_{Y'}+B_{Y'}$ is a $\Q$-Cartier divisor and $K_{Y''}+B_{Y''}=\pi^*(K_{Y'}+B_{Y'})$,
\item[(ii)] $M_{Y'}$ is a $\Q$-Cartier divisor carrying a singular metric whose all Lelong numbers are zero, and $M_{Y''}=\pi^*M_{Y'}$.
\end{enumerate}
\end{thm}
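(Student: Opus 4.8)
The strategy is to run the same proof sketched above for Theorem~\ref{nefness}, but to replace the final input -- the nefness statement of \cite[Theorem 5]{Kaw81} -- with the stronger positivity statement of \cite[Theorem 1.1]{FF17}. First I would produce, exactly as in the sketch of Theorem~\ref{nefness}, a proper birational morphism $Y'\to Y$ which is an Ambro model for $f$: after a suitable generically finite base change and a Kawamata cover, one arranges that on an open set $U'\subseteq Y'$ whose complement has codimension at least $2$ the local systems $R^if'_*\C_{X'}|_{U'}$ have unipotent monodromies, that $M_{Y'}$ is a Cartier divisor, and that $B_{Y'}$ has simple normal crossings support; moreover $M_{Y''}=\pi^*M_{Y'}$ and $K_{Y''}+B_{Y''}=\pi^*(K_{Y'}+B_{Y'})$ for every further birational $\pi\colon Y''\to Y'$. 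This takes care of part~(i) and of the $\Q$-Cartier and pullback assertions in part~(ii); it is verbatim the content of \cite[Theorem 8.5.1]{Kol07} together with \cite[8.10.7--8.10.10]{Kol07}.

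The new point is the existence of the singular metric on $M_{Y'}$ with vanishing Lelong numbers. Here I would invoke, as in the sketch, that for a klt-trivial fibration \cite[Theorem 8.5.1]{Kol07} exhibits the line bundle $\OO_{Y'}(M_{Y'})$ as a quotient of the locally free sheaf $f'_*\omega_{X'/Y'}$. Now instead of concluding nefness via \cite[Theorem 5]{Kaw81}, I apply \cite[Theorem 1.1]{FF17}: the bundle $f'_*\omega_{X'/Y'}$ carries a canonical singular Hermitian metric all of whose Lelong numbers vanish, and any holomorphic quotient bundle inherits a singular metric (the quotient metric) whose Lelong numbers are still zero -- taking a quotient only makes curvature more positive and cannot create singularities of the potential. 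Pulling back this metric on $\OO_{Y'}(M_{Y'})$ along $\pi$ and using $M_{Y''}=\pi^*M_{Y'}$ gives the corresponding metric on $M_{Y''}$; since $\pi$ is birational, pullback of a metric with vanishing Lelong numbers again has vanishing Lelong numbers off the exceptional locus, and one checks there is no contribution along exceptional divisors because $M_{Y'}$ already descends. One should also recall, as in the sketch, that a completely analogous argument handles the passage from the Hodge-theoretic setup back to the original fibration, and note that the klt hypothesis is what guarantees $f'_*\omega_{X'/Y'}$ is locally free and that $\OO_{Y'}(M_{Y'})$ is genuinely a quotient of it, rather than merely a subsheaf of a more complicated Hodge-theoretic object as in the lc case.

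The main obstacle is the same structural subtlety as in Theorem~\ref{nefness}: realising $\OO_{Y'}(M_{Y'})$ as a quotient of $f'_*\omega_{X'/Y'}$ requires the careful unipotent-reduction and cyclic-cover bookkeeping of \cite[\S8.5, \S8.10]{Kol07}, and one must be sure that after these reductions the metric statement of \cite[Theorem 1.1]{FF17} still applies on the open set $U'$ and extends across the codimension-$\geq 2$ complement. Since a singular metric on a line bundle is determined by its local weights in codimension one, the extension across a codimension-$\geq 2$ set is automatic, so in the end this is not a serious difficulty; the genuinely substantive inputs -- the nefness-type positivity upgraded to a metric with trivial Lelong numbers, and the quotient construction -- are imported wholesale from \cite{FF17} and \cite{Kol07}, and the proof is essentially a matter of assembling them.
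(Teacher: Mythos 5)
Your approach matches the paper's, but you gloss over the one genuinely new ingredient. The paper states explicitly that the proof of part~(ii) is \emph{the same} as the sketch for Theorem~\ref{nefness}(ii) \emph{except} that one must invoke \cite[Corollary~4]{Fav99}: a $\Q$-divisor carries a singular metric with vanishing Lelong numbers if and only if its pullback by a proper surjective map does. This is precisely what is needed to descend the metric statement along the generically finite base change (log resolution composed with a Kawamata cover) that the reduction in \cite[Theorem~8.5.1, \S 8.10]{Kol07} introduces. You write that ``a completely analogous argument handles the passage from the Hodge-theoretic setup back to the original fibration,'' but this is exactly where the analogy breaks down: descent of nefness along a finite surjective morphism is immediate, whereas descent of the Lelong-number-zero property is not a formal consequence of anything and requires Favre's theorem (or a comparable result). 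Your explicit descent discussion only treats the birational pullback $\pi\colon Y''\to Y'$, not the finite Kawamata cover, and even there the justification (``no contribution along exceptional divisors because $M_{Y'}$ already descends'') is not a proof that the pulled-back metric has vanishing Lelong numbers along $\Exc(\pi)$; the clean way to handle both directions is again \cite[Corollary~4]{Fav99}.

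Everything else in your proposal is sound: reducing to unipotent monodromy via \cite[8.10.7--8.10.10]{Kol07}, realising $\OO_{Y'}(M_{Y'})$ as a quotient of $f'_*\omega_{X'/Y'}$ via \cite[Theorem~8.5.1]{Kol07}, applying \cite[Theorem~1.1]{FF17} to endow $f'_*\omega_{X'/Y'}$ with a singular metric whose Lelong numbers vanish, passing to the quotient metric, and extending across the codimension-$\geq 2$ locus. Once you insert the Favre input at the descent step, the argument is complete and agrees with the paper's.
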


The proof of part (ii) of the theorem is the same as the sketch of the proof of Theorem \ref{nefness}(ii) above, since a $\Q$-divisor carries a singular metric whose all Lelong numbers are zero if and only if its pullback by a proper surjective map carries a singular metric whose all Lelong numbers are zero by \cite[Corollary 4]{Fav99}.

\subsection{Inversion of adjunction}

In order to appreciate the following result and to see why base change property is important, let us go back to the construction of the canonical bundle formula. Recall that the discriminant divisor was constructed in terms of \emph{local} log canonical thresholds, that is, log canonical thresholds over the generic point of a prime divisor; in particular, with notation from Definition \ref{dfn:cbf}, for some prime divisor $P$ on $Y$, the pair $(X,\Delta+\gamma_P f^*P)$ does not have to be \emph{globally} log canonical. However, the following \emph{inversion of adjunction} \cite[Theorem 3.1]{Amb04} states that this is precisely what happens on an Ambro model.

\begin{thm}\label{thm:invAdjunction}
Let $f\colon(X,\Delta)\rightarrow Y$ be an lc-trivial fibration, and assume that $Y$ is an Ambro model for $f$. Then $(Y,B_Y)$ has klt, respectively log canonical, singularities in a neighbourhood of a point $y\in Y$ if and only if $(X,\Delta)$ has klt, respectively log canonical, singularities in a neighbourhood of $f^{-1}(y)$.
\end{thm}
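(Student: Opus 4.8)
The plan is to exploit the fact that, on an Ambro model, the moduli part $M_Y$ is nef (by Theorem \ref{nefness}) and, more importantly, that the ``defect of globality'' in the construction of the discriminant is measured precisely by a fixed part that vanishes after we have passed to a high enough birational model. Concretely, write $K_X+\Delta+\tfrac1r\ddiv\varphi = f^*(K_Y+B_Y+M_Y)$, set $L = K_Y+B_Y+M_Y$, and note that since $Y$ is an Ambro model we may also assume $K_Y+B_Y$ is $\Q$-Cartier. The direction ``$(X,\Delta)$ log canonical (resp. klt) near $f^{-1}(y)$ $\Rightarrow$ $(Y,B_Y)$ log canonical (resp. klt) near $y$'' is the elementary one: if $(X,\Delta)$ has the desired singularities over a neighbourhood of $y$, then by the very definition of $\gamma_P$ every coefficient $1-\gamma_P$ of $B_Y$ at prime divisors $P$ through $y$ is $<1$ (resp. $\le 1$), and one then runs the standard discrepancy computation on a log resolution of $(Y,B_Y)$ that is dominated by a log resolution of $(X,\Delta)$; the discrepancies of $(Y,B_Y)$ are bounded below by those of $(X,\Delta)$ over $y$ because pulling back $L$ and using $K_X+\Delta+\tfrac1r\ddiv\varphi=f^*L$ converts a valuation over $Y$ centered at $y$ into a valuation over $X$ with center meeting $f^{-1}(y)$.

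The content is the converse: assuming $(Y,B_Y)$ is log canonical (resp. klt) near $y$, deduce the same for $(X,\Delta)$ near $f^{-1}(y)$. First I would reduce, using the second Remark after Definition \ref{dfn:cbf} (\cite[Remark 3.6]{FL18}), to the case where $(X,\Delta)$ is globally log canonical; this is harmless since we only want to prove a statement about singularities and the reduction does not touch the generic fibre. Next, take a log resolution $g\colon \widetilde Y\to Y$ of $(Y,B_Y)$ and a commutative log resolution $\widetilde f\colon (\widetilde X,\widetilde\Delta)\to\widetilde Y$ of the induced lc-trivial fibration obtained by base change as in \eqref{eq:diag}; since birational base change changes $B$ and $M$ as b-divisors, and since $Y$ is already an Ambro model so $M$ descends, we have $K_{\widetilde Y}+B_{\widetilde Y} = g^*(K_Y+B_Y)$ and $M_{\widetilde Y}=g^*M_Y$, and the canonical bundle formula on $\widetilde Y$ reads $K_{\widetilde X}+\widetilde\Delta+\tfrac1r\ddiv\widetilde\varphi = \widetilde f^{\,*}(K_{\widetilde Y}+B_{\widetilde Y}+M_{\widetilde Y})$. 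Now the key point: for each prime divisor $\widetilde P$ on $\widetilde Y$ the multiplicity of $\widetilde f^{\,*}(K_{\widetilde Y}+B_{\widetilde Y})$ pulls back correctly, and by \emph{the definition of the discriminant on $\widetilde Y$} we have, over the generic point of $\widetilde P$, that $(\widetilde X,\widetilde\Delta + \gamma_{\widetilde P}\widetilde f^{\,*}\widetilde P)$ is log canonical there — and the coefficient $1-\gamma_{\widetilde P}$ is exactly the coefficient of $\widetilde P$ in $B_{\widetilde Y}$. Hence the condition ``$(Y,B_Y)$ log canonical (resp. klt) near $y$'' translates, via $K_{\widetilde Y}+B_{\widetilde Y}=g^*(K_Y+B_Y)$, into ``all coefficients of $B_{\widetilde Y}$ along divisors lying over a neighbourhood of $y$ are $\le 1$ (resp. $<1$)''; we must upgrade this local-over-each-divisor control to \emph{global} log canonicity of $(\widetilde X,\widetilde\Delta)$, hence of $(X,\Delta)$, over $y$.

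The mechanism for that upgrade — and the step I expect to be the main obstacle — is to show that after passing to the Ambro model the vertical part of $\lceil \widetilde\Delta^{\le -1}\rceil$-type divisors that could obstruct global log canonicity becomes $g$-exceptional, i.e.\ the ``bad'' contributions disappear on a high model. Concretely: let $N = \lceil(-\widetilde\Delta)^{\ge 0}\rceil$ and $P' = \lfloor \widetilde\Delta \rfloor^{\ge 0}$ (roughly, the rounded negative and the integral positive parts of $\widetilde\Delta$); the pair $(X,\Delta)$ is log canonical over $y$ iff the moduli-divisor-free part of the canonical bundle formula behaves, and one shows via the projection formula and the fact (Theorem \ref{nefness}, condition (d)) that $\widetilde f_*\OO_{\widetilde X}(\lceil\Xi\rceil)$ has rank one, that any vertical component of $\widetilde\Delta$ with coefficient exceeding what is permitted would force, over its image $\widetilde P$, a strictly larger value of the threshold $\gamma_{\widetilde P}$ than the one recorded in $B_{\widetilde Y}$ — a contradiction. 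Thus the ``global'' failure cannot occur over the generic point of any divisor through $y$, and since log canonicity (resp. klt-ness) is checked on divisors, we are done. I would cite \cite[Lemma 2.7]{Fuj10} (already invoked in the Remark after Definition \ref{dfn:lctrivial}) for the divisor-by-divisor bookkeeping, and follow the structure of Ambro's argument in \cite[Theorem 3.1]{Amb04}; the delicate part throughout is keeping track of the vertical divisors and ensuring the base change really has made the relevant monodromies unipotent so that no further boundary is created on $\widetilde Y$ outside the support of $B_{\widetilde Y}$.
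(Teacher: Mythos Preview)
The paper does not actually prove this statement: it simply cites \cite[Theorem~3.1]{Amb04} and remarks that Ambro's argument extends verbatim to the lc-trivial case once Theorem~\ref{nefness} is available. So there is no ``paper's own proof'' to compare against beyond Ambro's original.

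Your outline has the right skeleton --- pass to a log resolution $\widetilde Y\to Y$, use the Ambro-model identity $K_{\widetilde Y}+B_{\widetilde Y}=g^*(K_Y+B_Y)$ to translate discrepancies of $(Y,B_Y)$ into coefficients of $B_{\widetilde Y}$, and then relate those coefficients to singularities of the base-changed pair upstairs --- but the final paragraph misidentifies the mechanism. The rank-one condition~(d), the divisors $N$ and $P'$, and unipotent monodromies play no role here; those ingredients belong to the proof of Theorem~\ref{nefness}, not to inversion of adjunction. What you actually need, and what is missing from your sketch, is an \emph{equidimensionality / flattening} step: choose $\widetilde Y$ high enough and $\widetilde X$ a log resolution of the base-changed pair so that every vertical prime component of $\widetilde\Delta$ (indeed every $\widetilde f$-vertical prime divisor on $\widetilde X$) dominates a prime divisor on $\widetilde Y$. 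Once that is arranged and $(\widetilde X,\widetilde\Delta)$ is log smooth, klt/lc is a pure coefficient check: horizontal components are controlled by the hypothesis on the generic fibre, and each vertical component $D$ with image $\widetilde P$ has coefficient governed by $\gamma_{\widetilde P}$ via the definition of the discriminant on $\widetilde Y$. The ``upgrade from local-over-each-divisor to global'' you flag as the obstacle is then automatic, because on a log smooth model there is nothing to check beyond coefficients.

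A smaller point: your ``elementary'' direction also silently uses the Ambro-model identity. Knowing that the coefficients of $B_Y$ along divisors \emph{on} $Y$ are at most $1$ does not by itself bound discrepancies at \emph{exceptional} divisors; you need $K_{\widetilde Y}+B_{\widetilde Y}=g^*(K_Y+B_Y)$ again to convert those discrepancies into coefficients of $B_{\widetilde Y}$, which are then $\gamma_E-1$ and hence $\ge -1$ (resp.\ $>-1$) because the base-changed pair is lc (resp.\ klt) over the generic point of $E$. So both directions rest on the same descent identity, and the argument is symmetric once the flattening step is in place.
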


Note that Theorem \ref{thm:invAdjunction} is stated for klt-trivial fibrations in \cite{Amb04}, but the proof extends verbatim to the lc-trivial case by using Theorem \ref{nefness}.

We finish this subsection with the following nice result \cite[Proposition 3.1]{Amb05a} which we will apply in the proof of Theorem \ref{thm:kltdescent} below.

\begin{thm}\label{thm:pullbackAmbro}
Let $f\colon(X,\Delta)\rightarrow Y$ be a klt-trivial fibration, and assume that $Y$ is an Ambro model for $f$. Then for every base change by a surjective morphism $w\colon W\to Y$ we have $K_W+B_W\sim_\Q w^*(K_Y+B_Y)$ and $M_W\sim_\Q w^*M_Y$.
\end{thm}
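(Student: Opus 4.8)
The plan is to reduce, via Stein factorisation, to the two cases where $w$ is finite surjective and where $w$ is surjective with connected fibres. In both cases the canonical bundle formula of $f$ can be pulled back directly to $W$, which reduces the two assertions to a single one, and the remaining point — the behaviour of the moduli part — is then settled using that $M_Y$ descends because $Y$ is an Ambro model.

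Write $w=g\circ h$, where $h\colon W\to Z$ is surjective with connected fibres and $g\colon Z\to Y$ is finite surjective; since each of the two claimed $\Q$-linear equivalences is compatible with composition of base changes, it suffices to treat these two types separately. In either case form the base change as in \eqref{eq:diag}, with $\tau\colon X_W\to X$ the induced morphism, $f_W\colon(X_W,\Delta_W)\to W$ the resulting lc-trivial fibration, and $K_{X_W}+\Delta_W=\tau^*(K_X+\Delta)$ by the definition of $\Delta_W$. Pulling back the relation $K_X+\Delta+\frac{1}{r}\ddiv\varphi=f^*(K_Y+B_Y+M_Y)$ of Definition \ref{dfn:cbf} and using $\ddiv(\varphi\circ\tau)=\tau^*\ddiv\varphi$ together with $\tau^*f^*=f_W^*w^*$ gives
$$K_{X_W}+\Delta_W+\frac{1}{r}\ddiv(\varphi\circ\tau)=f_W^*\,w^*(K_Y+B_Y+M_Y).$$
The left-hand side differs from the expression $K_{X_W}+\Delta_W+\frac{1}{r_W}\ddiv\varphi_W$ occurring in the canonical bundle formula for $f_W$ by a principal $\Q$-divisor; since $f_W$ has connected fibres and $W$ is normal we have $f_{W*}\OO_{X_W}=\OO_W$, so $f_W^*$ is injective on $\Q$-linear equivalence classes of $\Q$-Cartier $\Q$-divisors, and therefore
$$(K_W+B_W)+M_W\sim_\Q w^*(K_Y+B_Y)+w^*M_Y.$$
Hence the two assertions of the theorem are equivalent, and it is enough to prove $M_W\sim_\Q w^*M_Y$.

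Since $Y$ is an Ambro model, $M_Y$ is $\Q$-Cartier and $M_{Y''}=\pi^*M_Y$ for every birational morphism $\pi\colon Y''\to Y$, that is, $M$ descends to $Y$. To pass from birational base changes to $w$ one uses the construction of the moduli part recalled after Theorem \ref{nefness}: after a suitable base change and cyclic cover, $\OO(M)$ becomes a quotient of a canonical extension $f'_*\omega_{X'/Y'}$ of a bottom Hodge piece (\cite[\S8]{Kol07}, \cite[Theorem 5]{Kaw81}), and on an Ambro model this description is already stable under further birational modification. The formation of these canonical extensions commutes with pullback along $w$: for finite $w$ this is the compatibility of canonical extensions of variations of Hodge structure with finite covers, valid because the monodromies have been rendered unipotent — the very reason one works on an Ambro model — and for $w$ with connected fibres the Hodge-theoretic input is simply pulled back along $h$. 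It follows that the moduli part of $f_W$ is $\Q$-linearly equivalent to $w^*M_Y$, and $K_W+B_W\sim_\Q w^*(K_Y+B_Y)$ then follows from the displayed equivalence above.

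The main obstacle is exactly this last step: one must carry out the auxiliary constructions entering the definition of the moduli part — the choice of $\varphi$, the cyclic cover, and the semistable reduction producing unipotent monodromy — compatibly with an arbitrary surjective $w$, and verify that the resulting Hodge bundle over $W$ is the pullback of the one over $Y$. For generically finite $w$ one may instead argue more geometrically, comparing an Ambro model $W^\dagger$ of $f_W$ with $Y$ through the morphism $W^\dagger\to W\to Y$ and invoking the descent of Theorem \ref{nefness}; but the identification of the moduli parts over the generically finite part still rests on the same Hodge-theoretic compatibility, and only a qualitative shadow of it — that $(W,B_W)$ and $w^*(Y,B_Y)$ have the same non-klt and non-log-canonical loci — can be read off directly from the inversion of adjunction of Theorem \ref{thm:invAdjunction}.
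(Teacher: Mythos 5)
The reduction in your first two paragraphs is sound and matches the natural skeleton of a proof: pulling back the canonical bundle formula along $\tau$ and using that $f_{W*}\OO_{X_W}=\OO_W$ makes $f_W^*$ injective on $\Q$-linear equivalence classes, one finds that $(K_W+B_W)+M_W\sim_\Q w^*(K_Y+B_Y+M_Y)$, so the two assertions are equivalent and everything reduces to $M_W\sim_\Q w^*M_Y$. This is exactly the right first move. Note, though, that your Stein-factorisation reduction has a hidden hypothesis: to compose the finite case $g\colon Z\to Y$ with the connected-fibre case $h\colon W\to Z$, you need $Z$ to be an Ambro model for the induced fibration $f_Z$, which is not given and is in fact part of what one must establish.

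The genuine gap is in the third paragraph, and you essentially say so yourself. The entire content of the theorem lies in the claim that the Hodge bundle whose quotient gives $\OO(M)$ is compatible with pullback along an arbitrary surjective $w$ once the monodromies are unipotent; you assert this (``the formation of these canonical extensions commutes with pullback along $w$'') but supply no argument, and your closing paragraph explicitly flags it as ``the main obstacle.'' In Ambro's proof of \cite[Proposition 3.1]{Amb05a} this is precisely the step that is carried out carefully: one must compare the cyclic covers, the period maps, and the canonical extensions on $Y$ and on $W$, and the unipotence on the Ambro model is what makes the canonical extension functorial under finite pullback; for the connected-fibre factor the variation of Hodge structure is simply pulled back but this too must be checked against the auxiliary choices ($\varphi$, the cyclic cover, the Kawamata cover). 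The inversion-of-adjunction observation you mention at the end only controls non-klt and non-lc loci, which is strictly weaker than the required $\Q$-linear equivalence of moduli parts. As written, the proposal is a correct outline of what a proof would have to do, but it does not prove the theorem.
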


\subsection{Coefficients of the moduli divisor}

Often in applications one needs to bound the denominators of $M_Y$. For instance, such bounds were used in \cite[Theorem 1.2]{Flo14}.

\begin{thm}
For each nonnegative integer $b$ there exists an integer $N$ depending on $b$ such that the following holds.

Let $f\colon(X,\Delta)\to Y$ be a klt-trivial fibration with a general fibre $F$, and let $r$ be the smallest positive integer such that $r(K_F+\Delta|_F)\sim0$. Let $E\to F$ be the associated $r$-th cyclic cover and let $\overline{E}$ be a resolution of singularities of $E$. If $\dim H^{\dim\overline{E}}(\overline{E},\C)=b$, then the divisor $NM_Y$ is integral.
\end{thm}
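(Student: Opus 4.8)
The plan is to reduce the statement to a uniform boundedness question about Hodge structures on the cyclic cover. First I would pass to an Ambro model: by Theorem \ref{nefness} there is a proper birational $\rho\colon Y'\to Y$ such that $M_{Y'}$ descends, and since $\rho_*M_{Y'}=M_Y$, it suffices to bound the denominators of $M_{Y'}$. After a further generically finite base change of the type used in the sketch of the proof of Theorem \ref{nefness} (a Kawamata cover trivialising the monodromies of the relevant local system), and replacing $X$ by the cyclic cover associated to $\sqrt[r]{\varphi}$, we are in the situation where $\OO_{Y'}(M_{Y'})$ is a quotient of $f'_*\omega_{X'/Y'}$, which is the canonical extension of the bottom Hodge piece of the variation of Hodge structure $R^{\dim X-\dim Y}f'_*\C_{X'}|_{U'}$. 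The fibre of this VHS over a general point is the Hodge structure on the cohomology of the resolution $\overline{E}$ of the cyclic cover, so the rank of the relevant sheaves is controlled by $b=\dim H^{\dim\overline{E}}(\overline{E},\C)$.

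Next I would track how the denominator of $M_{Y'}$ is produced. Two sources contribute. The first is the size of the base change $\rho$: the degree of the Kawamata cover needed to make the monodromy unipotent is bounded once the rank $b$ of the local system is bounded, since the local monodromy operators are quasi-unipotent with eigenvalues that are roots of unity of order dividing an explicit function of $b$ (Borel's theorem, or the arguments in \cite[\S8.10]{Kol07}). The second is that even on the Ambro model, the moduli $\Q$-divisor $M_{Y'}$ need not be integral: its class is the first Chern class of a subsheaf of a Deligne canonical extension, and the possible fractional parts along a boundary divisor are governed by the eigenvalues of the local monodromy, hence again by $b$. Combining these two bounds yields an integer $N=N(b)$, depending only on $b$ (and in particular not on $\dim X$, $\dim Y$, or the coefficients of $\Delta$), such that $N M_{Y'}$, and therefore $N M_Y$, is integral. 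I would also invoke $\dim H^{\dim\overline E}(\overline E,\C)=b$ to bound $r$ itself in terms of $b$ — the cyclic cover $E\to F$ has degree $r$, so $r$ divides the order of a monodromy transformation acting on a $b$-dimensional space — which is what lets the final constant depend on $b$ alone.

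The main obstacle is making the second source of denominators effective and uniform: one must show that the fractional part of $M_{Y'}$ along each prime divisor of $B_{Y'}$ lies in a finite set of rationals depending only on $b$. This requires the precise description of $\OO_{Y'}(M_{Y'})$ inside the canonical extension — not merely that it is a nef quotient, but that it is a Hodge-theoretic subsheaf whose degree defect along a boundary divisor is computed from the logarithm of the unipotent part and the eigenvalue exponents of the semisimple part of the local monodromy, all of which are $\frac{1}{N}\Z$-valued once $b$ bounds the rank. The cleanest route is to cite the explicit bounds on denominators in the canonical bundle formula in terms of the Hodge-theoretic data, for instance via the local constancy and the structure theory of \cite[\S8]{Kol07} together with \cite[Theorem 5]{Kaw81}; the remaining work is purely the bookkeeping of combining the base-change degree with the monodromy-eigenvalue denominators into a single $N(b)$.
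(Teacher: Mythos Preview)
The paper does not give its own proof of this theorem: it is a survey, and the statement is accompanied only by the sentence ``The result was proved in \cite[Theorem 3.1]{FM00} when $\Delta=0$, but the same proof works for klt-trivial fibrations \cite[Theorem 5.1]{Flo14}.'' So the comparison is really between your sketch and the Fujino--Mori argument those references contain.

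Your overall strategy matches theirs: pass to a model where $M_Y$ is computed Hodge-theoretically as a piece of the canonical extension of the VHS on the middle cohomology of the cyclic cover, and then bound the denominators of the coefficients of $M_Y$ by bounding the orders of the eigenvalues of the local monodromy. The ``two sources of denominators'' framing is somewhat redundant --- once you know the monodromy eigenvalue orders are bounded, either the Kawamata-cover route or the direct canonical-extension computation gives the same $N(b)$ --- but it is not wrong.

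There is one point where your sketch misidentifies the key input. You attribute the bound ``eigenvalues are roots of unity of order dividing an explicit function of $b$'' to Borel's theorem. Borel's theorem only gives quasi-unipotency; it says the eigenvalues are roots of unity, but gives no control on their order. The actual reason the orders are bounded in terms of $b$ is that the local monodromy preserves the integral lattice $H^{\dim\overline{E}}(\overline{E},\Z)$, so its semisimple part is (conjugate to) a finite-order element of $\GL_b(\Z)$, and one then invokes the classical Minkowski-type bound on the exponent of a finite subgroup of $\GL_b(\Z)$. This integrality is precisely what is used in \cite{FM00}, and it is the step that makes $N$ depend only on $b$ rather than on the particular fibration. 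Your remark that $r$ is bounded by $b$ uses the same mechanism: the Galois group $\Z/r\Z$ acts faithfully on the integral cohomology of $\overline{E}$. Once you replace the Borel reference by this lattice argument, your sketch is essentially the Fujino--Mori proof.
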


The result was proved in \cite[Theorem 3.1]{FM00} when $\Delta=0$, but the same proof works for klt-trivial fibrations \cite[Theorem 5.1]{Flo14}.

A more refined result holds when a general fibre is a rational curve, \cite[Theorem 1.6(2)]{Flo13}.

\begin{thm}
Fix a positive integer $r$. For a prime number $q$ set $s(q) = \max\{s \mid q^s \leq 2r\}$ and define
$$N=\prod_{q \text{ prime}}q^{s(q)}.$$
\begin{enumerate}
\item[(a)] Let $f\colon(X,\Delta)\to Y$ be an lc-trivial fibration whose general fibre $F$ is a rational curve, and assume that $r$ is the smallest positive integer such that $r(K_F+\Delta|_F)\sim0$. Then the divisor $NM_Y$ is integral. 
\item[(b)] Assume $r$ is odd. Then there exists an lc-trivial fibration $f\colon(X,\Delta)\to Y$ such that if $v$ is the smallest integer for which the divisor $v M_Y$ is integral, then $v = N/r$.
\end{enumerate}
\end{thm}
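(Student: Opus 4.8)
The plan is to reduce the statement to a local, one-dimensional computation on the base. Since both the discriminant and the moduli divisor are b-divisors, and since the integrality of $NM_Y$ can be checked at the generic point of each prime divisor $P\subseteq Y$, I would first replace $Y$ by a general curve through such a point, or more precisely restrict the lc-trivial fibration to the preimage of a general complete intersection curve $C\subseteq Y$ meeting $P$ transversally at one point. After a suitable birational base change, using Theorem \ref{nefness}, I may assume we are on an Ambro model, so that $M_Y$ descends and the canonical bundle formula behaves well under the restriction. This reduces part (a) to the case where the base is a curve and the general fibre $F$ is a rational curve with $r(K_F+\Delta|_F)\sim 0$ for the prescribed $r$.

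For the curve case, the key point is to understand the coefficient of $M_C$ at a point $c\in C$, which by the canonical bundle formula equals $1-\gamma_c-\mathrm{coeff}_c(B_C)$ up to the $\tfrac1r\ddiv\varphi$ correction; the essential input is that the local monodromy of the family of cyclic covers $E\to F$ around $c$ has order dividing $r$ (since $r(K_F+\Delta|_F)\sim 0$ globally forces the period of the associated section to be $r$-torsion), combined with the boundedness of the possible denominators coming from the fact that $F$ is a curve so that $H^1(\overline E,\C)$ governs everything. Concretely, I expect that after a base change of degree $e$ the moduli part becomes integral, where $e$ divides the order of the local monodromy, hence divides $r$; then the coefficient of $M_C$ at $c$ has denominator dividing $e$, and a careful bookkeeping of which prime powers $q^s$ can occur as such an $e$ (namely those with $q^s\le 2r$, the factor $2$ entering because the relevant monodromy acts on a rank-two Hodge structure $H^1$ of an elliptic-type cover, or because of the $\mathbb{Z}/2$ ambiguity in the hyperelliptic presentation of the cyclic cover of $\PS^1$) yields exactly $N=\prod_q q^{s(q)}$.

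The main obstacle is the precise determination of the constant $N$: proving that $NM_Y$ is integral is the comparatively soft direction, following from the monodromy bound and the structure of variations of Hodge structure of rank at most $2$, but pinning down that no smaller $N$ works in general — part (b) — requires an explicit construction. For (b), with $r$ odd, I would build an lc-trivial fibration over $\PS^1$ whose general fibre is $\PS^1$ with a boundary $\Delta|_F$ supported at points exchanged by an order-$r$ cyclic cover $E\to\PS^1$, arranged so that the period map $j\colon C\to\PS^1$ (or its analogue) has a ramification profile at the singular fibres realizing the maximal prime-power monodromies $q^{s(q)}$ simultaneously; the $r$ in the denominator $v=N/r$ appears because the global $r$-torsion of $K_F+\Delta|_F$ already absorbs a factor of $r$ into the linear equivalence defining $M_Y$, so only $N/r$ survives as the true denominator. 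The delicate part of (b) is checking that such a configuration of singular fibres is actually geometrically realizable — i.e.\ that the prescribed local monodromies glue to a genuine family — which is where I would invoke the classification of singular fibres of the relevant one-parameter families (as in Example \ref{ellfibr} for the elliptic case) and an explicit equation like a family of conics or binary forms over $\PS^1$ with controlled degenerations.
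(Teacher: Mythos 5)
The paper itself does not prove this theorem; it merely cites \cite[Theorem 1.6(2)]{Flo13}. So let me assess your proposal directly.

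There is a genuine gap at the heart of your sketch. You assert that ``the local monodromy of the family of cyclic covers $E\to F$ around $c$ has order dividing $r$,'' and you use this to conclude that the base change degree $e$ divides $r$. If that were correct, it would give $rM_Y$ integral, which is a much stronger statement than $NM_Y$ integral since $N=\operatorname{lcm}(1,\dots,2r)$ is typically far larger than $r$. But part (b) of the very theorem you are proving shows that the minimal $v$ with $vM_Y$ integral can equal $N/r$, and $N/r>r$ already for $r=3$ (where $N=60$ and $N/r=20$). So the monodromy-divides-$r$ claim is simply false, and the passage from ``period of the associated section is $r$-torsion'' to a bound on the local monodromy of the Hodge structure of the covers is an unwarranted conflation of the monodromy on the Galois group $\Z/r$ of the covering (which indeed has order dividing $r$) with the monodromy on $H^1(\overline E)$ (which is what governs the coefficients of $M_Y$ and is not bounded by $r$). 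You then separately announce that the relevant $e$ are prime powers $q^s\le 2r$ and offer two mutually inconsistent informal reasons for the factor $2$ without deriving either; this is exactly the point of the theorem and cannot be left as bookkeeping.

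The actual mechanism in \cite{Flo13} is different in an essential way: since $\deg\Delta|_F=2$ and the denominators of $\Delta|_F$ divide $r$, the integral divisor $r\Delta|_F$ on $\PS^1\simeq F$ has degree $2r$, and one passes to the family of \emph{double} covers of $\PS^1$ branched along these $2r$ points, i.e.\ a family of hyperelliptic curves. The monodromy eigenvalues of the associated variation of Hodge structure are roots of unity whose orders are controlled by the combinatorics of $2r$ branch points, which is precisely where $\operatorname{lcm}(1,\dots,2r)=N$ comes from; the $r$ in $v=N/r$ in part (b) then disappears because $\frac1r\ddiv\varphi$ already absorbs a factor of $r$. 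Your reduction to a one-dimensional base via \cite{Flo14} is fine, and your general instinct to use the Hodge theory of covers and an explicit family over $\PS^1$ for (b) is in the right spirit, but the identification of the correct cover (a double cover branched at $2r$ points, not a degree-$r$ cyclic cover with monodromy of order $r$) is the missing idea, and without it neither the bound in (a) nor the sharpness in (b) is reachable.
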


\subsection{Goodness of moduli divisors}

Now we come to the central topic of this survey, already announced in the introduction: the descent of singularities. Since we already know the nefness of the moduli divisor by Theorem \ref{nefness}, if it were additionally big, then this would allow to conclude in many cases. Bigness is too much to ask; however, the following result of Ambro \cite[Theorem 3.3 and Proposition 4.4]{Amb05a} turn out to be almost as good.

\begin{thm}\label{ambro1}
Let $f\colon(X,\Delta)\to Y$ be a klt-trivial fibration between normal projective varieties such that $\Delta$ is effective over the generic point of $Y$. Then there exists a diagram
$$
\xymatrix{
(X,\Delta)\ar[d]_f && (X^+,\Delta^+)\ar[d]^{f^+}\\
Y & W \ar[l]^{\tau}\ar[r]_{\rho}&Y^+
}
$$
such that:
\begin{enumerate}
\item[(i)] $f^+\colon(X^+,\Delta^+)\rightarrow Y^+$ is a klt-trivial fibration,
\item[(ii)] $\tau$ is generically finite and surjective, and $\rho$ is surjective,
\item[(iii)] if $M_Y$ and $M_{Y^+}$ are the moduli divisors of $f$ and $f^+$ respectively, then $M_{Y^+}$ is big and, after possibly a birational base change, we have $\tau^*M_Y=\rho^*M_{Y^+}$,
\item[(iv)] there exists a non-empty open set $U\subseteq W$ and an isomorphism
$$
\xymatrix{
(X,\Delta)\times_{Y} U\ar[rd]\ar[rr]^{\simeq\quad}&&(X^+,\Delta^+)\times_{Y^+} U\ar[ld]\\
&U,&
}
$$
\item[(v)] if there exists an isomorphism
$$\Phi\colon (X,\Delta)\times_Y U\to (F, \Delta_F)\times U$$
over a non-empty open subset $U\subseteq Y$, then $\Phi$ extends to an isomorphism over 
$$Y^0=Y\setminus \big(\Supp B_Y\cup \Sing (Y)\cup f(\Supp \Delta_v^{\leq0})\big).$$
\end{enumerate}
\end{thm}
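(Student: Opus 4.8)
The plan is to reduce the statement to a situation where the fibres of $f$ are genuinely "variable" in a moduli-theoretic sense, and then to extract bigness of the moduli divisor from the variation. First I would pass to an Ambro model for $f$, so that by Theorem \ref{nefness} the moduli divisor $M_Y$ is nef and $B_Y$ has simple normal crossings support, and by Theorem \ref{thm:invAdjunction} the singularities of $(Y,B_Y)$ are controlled by those of $(X,\Delta)$; since $\Delta$ is effective over the generic point of $Y$, the pair $(Y,B_Y)$ is at worst log canonical and $B_Y$ is effective. The key input is the behaviour of the polarised variation of Hodge structure attached to the lc-trivial fibration (via the cyclic cover $\sqrt[r]{\varphi}$ as in the sketch following Theorem \ref{nefness}): after a generically finite base change $W\to Y$ making the monodromies unipotent and after a log resolution, $\OO(M_W)$ is a quotient of $f'_*\omega_{X'/W}$, which is the canonical extension of the bottom Hodge piece. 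The construction of $Y^+$ is then essentially the \emph{base of the moduli map}: one takes the normalisation of the image of $W$ under the period map / the morphism to the relevant moduli space of the fibres $(F,\Delta_F)$ together with their $r$-cyclic covers, and sets $f^+\colon (X^+,\Delta^+)\to Y^+$ to be the pulled-back family. By construction $\tau$ is generically finite onto $Y$ and $\rho\colon W\to Y^+$ is surjective, giving (ii), while (i) is checked directly — the new family is again an lc-trivial fibration because being klt-trivial is preserved under base change (the diagram \eqref{eq:diag}) and $M_{Y^+}$ pulls back to $M_W$ up to $\Q$-linear equivalence by the b-divisor / functoriality properties of the moduli part, which together with $\tau^*M_Y=\rho^*M_W$ after a birational modification yields (iii)'s compatibility.

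The heart of the argument is the \emph{bigness} assertion in (iii). Here I would argue that the moduli divisor on $Y^+$ is, by the very definition of $Y^+$ as the base of the moduli map, the pullback of an ample (or at least big) class from the relevant moduli space: the moduli part is semiample with Iitaka dimension equal to the variation of the family, and on $Y^+$ the variation is maximal by construction (we have literally modded out by the locus where fibres are isomorphic). Concretely, one shows that $M_{Y^+}$ has positive self-intersection in top degree against a general complete intersection curve, using that the period map is finite on $Y^+$ and that the Hodge-theoretic metric on the canonical extension has strictly positive curvature transverse to the fibres of the period map — the latter being the standard positivity of the Hodge metric (Griffiths), here upgraded by \cite[Theorem 1.1]{FF17} to a metric with vanishing Lelong numbers so that no correction term spoils the intersection computation. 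Alternatively, and perhaps more in the spirit of Ambro's original proof, one invokes the semiampleness of $M_{Y^+}$ in the relevant cases together with the fact that the associated morphism is generically finite onto its image precisely because the fibres vary maximally over $Y^+$.

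For (iv) and (v) I would work over the open locus where everything is as nice as possible. Part (iv) is formal: over the open set $U\subseteq W$ where $W\to Y$ and $W\to Y^+$ are both étale and the families are smooth, the fibre products $(X,\Delta)\times_Y U$ and $(X^+,\Delta^+)\times_{Y^+} U$ are canonically identified because both equal the pullback to $W$ of the same family, by construction of $X^+$ as a pullback. Part (v) is the most delicate of the "easy" parts: if the family is \emph{isotrivial} over a dense open $U\subseteq Y$, i.e.\ isomorphic to a product $(F,\Delta_F)\times U$, one must extend the trivialisation $\Phi$ across the boundary. The natural approach is a monodromy/descent argument: the obstruction to extending $\Phi$ over a point is either (a) monodromy around a component of $B_Y$, which is killed precisely because $B_Y$ supports the discriminant and we have excised $\Supp B_Y$; (b) a singularity of $Y$, excised by removing $\Sing(Y)$; or (c) a jump in the vertical part of $\Delta$ coming from fibre components with nonpositive coefficient, which is exactly why one removes $f(\Supp\Delta_v^{\le 0})$. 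Over the complement $Y^0$ of these three loci the family is smooth, the pair is log canonical with the expected discriminant $0$, and $\Phi$ extends by Hartogs-type arguments (e.g.\ using that an isomorphism of polarised families extending over a codimension-one complement extends everywhere, as $\Aut(F,\Delta_F)$ is a scheme and the section of the automorphism-bundle extends).

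The main obstacle I expect is the bigness in (iii): making precise the statement "the variation is maximal over $Y^+$, hence $M_{Y^+}$ is big" requires either a robust theory of moduli of the fibres $(F,\Delta_F)$ (which is delicate in the lc, non-effective-$\Delta$ setting) or a purely Hodge-theoretic substitute — one must show that the canonical extension of the bottom Hodge piece, restricted to a generically finite base, has a quotient line bundle with positive top self-intersection, and control the possible contribution of the boundary $B_{Y^+}$ and of the degeneration loci to that intersection number. The metric-positivity refinement of \cite{FF17}, which guarantees vanishing Lelong numbers, is what makes this intersection-theoretic estimate clean, so I would route the argument through that result rather than through bare nefness.
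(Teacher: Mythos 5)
Note first that this survey states Theorem~\ref{ambro1} with a citation to \cite[Theorem~3.3 and Proposition~4.4]{Amb05a} and gives no proof in the text, so there is no in-paper argument to compare against; I evaluate your plan on its own terms. Your overall scaffolding --- cyclic cover associated to $\sqrt[r]{\varphi}$, variation of Hodge structure, base change to unipotent monodromy on a model $W$, $Y^+$ via the Stein factorisation of the period map, descend the family to $Y^+$ --- is the right shape and matches the known approach, and parts (i), (ii) and the compatibility $\tau^*M_Y=\rho^*M_{Y^+}$ then come out of the base-change machinery (Theorem~\ref{thm:pullbackAmbro}). But your treatment of (iv) understates the difficulty: the existence of $(X^+,\Delta^+)$ requires \emph{descending} the base-changed family along $\rho\colon W\to Y^+$, which is a real construction, not a formal identification of fibre products (there is no map $Y^+\to Y$ to pull back along). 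Similarly (v) needs more than the informal monodromy/Hartogs picture, though you do point at the three correct sources of obstruction encoded in $Y^0$.

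The genuine gap is the bigness in (iii), exactly where you flag it, and neither remedy you propose closes it. The alternative ``invoke the semiampleness of $M_{Y^+}$'' is circular: b-semiampleness of the moduli part is precisely the open conjecture that this theorem is used to attack, and Ambro's proof neither assumes nor could assume it. The metric route through \cite[Theorem~1.1]{FF17} is anachronistic and, more importantly, not sufficient as written: vanishing Lelong numbers control the multiplier ideal of the metric, but to conclude $(M_{Y^+})^{\dim Y^+}>0$ from a merely semi-positive curvature current one must still show that its top Monge--Amp\`ere power carries no mass on pluripolar sets \emph{and} that the curvature form is strictly positive on a set of full measure; the latter requires the derivative of the period map for the \emph{descended} family over $Y^+$ to be generically injective, which you assert by construction but do not verify after descent, and the former is not a consequence of vanishing Lelong numbers alone. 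The actual mechanism is to identify $M_{Y^+}$, after the base change, with the pullback under a generically finite, properly extended period map of a class that is ample on a Baily--Borel-type compactification of the period domain quotient, and then to control the boundary contributions using the theory of degenerations of Hodge structure. Your plan gestures towards this identification but leaves it open, and that identification is the content of the bigness statement.
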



Note that in (v) one does not need that $\Delta$ is effective on the generic fibre of $f$.

For us, the most important part of this result is (iii). Its immediate consequence is the descent of klt singularities \cite[Theorem 0.2]{Amb05a}.

\begin{thm}\label{thm:kltdescent}
Let $(X,\Delta)$ be a projective klt pair with $\Delta$ effective, and let $f\colon X\to Y$ be a surjective morphism to a normal projective variety such that $K_X+\Delta\sim_\Q f^*D$ for some $\Q$-Cartier $\Q$-divisor $D$ on $Y$. Then there exists an effective $\Q$-divisor $\Delta_Y$ on $Y$ such that the pair $(Y,\Delta_Y)$ is klt and 
$$K_X+\Delta\sim_\Q f^*(K_Y+\Delta_Y).$$
\end{thm}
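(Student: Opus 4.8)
The plan is to reduce to the situation covered by Theorem \ref{ambro1} and then use the bigness of the moduli divisor on the auxiliary model $Y^+$ to produce the boundary $\Delta_Y$. First I would pass to an Ambro model: by Theorem \ref{nefness} there is a proper birational morphism $\mu\colon Y'\to Y$ such that $M_{Y'}$ is nef and $B_{Y'}$ has simple normal crossings support, and by Theorem \ref{thm:kltdescent}'s own conclusion it suffices to produce $\Delta_{Y'}$ upstairs, since $\mu_*$ of a klt boundary solving the problem on $Y'$ will solve it on $Y$ (here one uses that $(Y,\Delta_Y)$ is automatically klt if $(Y',\Delta_{Y'})$ is and $K_{Y'}+\Delta_{Y'}=\mu^*(K_Y+\Delta_Y)$, together with the fact that $M_Y=\mu_*M_{Y'}$, $B_Y=\mu_*B_{Y'}$, so the canonical bundle formula descends). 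So from now on assume $Y$ is an Ambro model and $f\colon(X,\Delta)\to Y$ is a klt-trivial fibration with $\Delta$ effective — note $\Delta$ effective globally, in particular over the generic point, so Theorem \ref{ambro1} applies.

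Next I apply Theorem \ref{ambro1} to get the diagram with $W$, $Y^+$, $\tau\colon W\to Y$, $\rho\colon W\to Y^+$, and a klt-trivial fibration $f^+\colon(X^+,\Delta^+)\to Y^+$ whose moduli divisor $M_{Y^+}$ is \emph{big and nef}. The point of bigness is this: a big and nef $\Q$-divisor $M_{Y^+}$ can be written, by Kodaira's lemma, as $M_{Y^+}\sim_\Q A+E$ with $A$ ample and $E$ effective, and then after replacing $Y^+$ by a resolution and $M_{Y^+}$ by a small perturbation one can arrange that $M_{Y^+}\sim_\Q \Delta^+_{\mathrm{mod}}$ for some effective $\Q$-divisor which, together with $B_{Y^+}$, keeps the pair log canonical — in fact klt, since we may take $E$ with arbitrarily small coefficients and the pair $(Y^+,B_{Y^+})$ is klt by inversion of adjunction (Theorem \ref{thm:invAdjunction}), using that $(X^+,\Delta^+)$ is klt (it is a base change of the klt pair $(X,\Delta)$, and klt is preserved under the crepant pullback defining $\Delta^+$ over the locus where $\tau$ is étale, while a further blow-up handles the rest). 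Setting $\Delta_{Y^+}=B_{Y^+}+\Delta^+_{\mathrm{mod}}$ gives $K_{X^+}+\Delta^+\sim_\Q (f^+)^*(K_{Y^+}+\Delta_{Y^+})$ with $(Y^+,\Delta_{Y^+})$ klt and $\Delta_{Y^+}$ effective.

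It remains to transport $\Delta_{Y^+}$ back to $Y$ through $W$. By Theorem \ref{thm:pullbackAmbro} applied to $f$ on the Ambro model $Y$ and the base change $\tau\colon W\to Y$, we have $K_W+B_W\sim_\Q \tau^*(K_Y+B_Y)$ and $M_W\sim_\Q\tau^*M_Y$; similarly, after a birational modification of $Y^+$ to an Ambro model for $f^+$ (which does not change $M_{Y^+}$'s numerical class and keeps it big) and applying the same result to $\rho\colon W\to Y^+$, we get $M_W\sim_\Q\rho^*M_{Y^+}$ and a pullback formula for the discriminant. Combining with part (iii) of Theorem \ref{ambro1}, $\tau^*M_Y\sim_\Q\rho^*M_{Y^+}$, so $\tau^*(K_Y+B_Y+M_Y)\sim_\Q\rho^*(K_{Y^+}+\Delta_{Y^+})$. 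Now $\rho^*\Delta_{Y^+}$ is an effective $\Q$-divisor on $W$ with $(W,\rho^*\Delta_{Y^+})$ klt (klt is preserved under pullback by the generically finite $\rho$ in the crepant sense, up to adjusting by effective exceptional divisors). One then checks that $\rho^*\Delta_{Y^+}=\tau^*\Delta_Y$ for a well-defined $\Q$-divisor $\Delta_Y$ on $Y$: the class $\tau^*(K_Y+B_Y+M_Y)-K_W$ descends along $\tau$, and because $\tau$ is generically finite and the construction is compatible with the b-divisor structure, the resulting divisor $\Delta_Y$ on $Y$ is effective and $(Y,\Delta_Y)$ is klt by descent of klt singularities along the finite-type map $\tau$ (concretely: for each valuation over $Y$, its discrepancy with respect to $(Y,\Delta_Y)$ is controlled by the corresponding discrepancy over $W$). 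Finally $K_X+\Delta\sim_\Q f^*(K_Y+\Delta_Y)$ follows by pulling the relation on $W$ back to $X^+$, using the isomorphism of (iv) over an open set and then the fact that a $\Q$-linear equivalence of $\Q$-Cartier divisors that holds over a dense open set and whose two sides are both pullbacks from $Y$ must hold everywhere.

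The main obstacle I expect is the descent step in the last paragraph: one has the desired equivalence upstairs on $W$, but $\tau$ is only generically finite, so extracting an \emph{effective} $\Q$-divisor $\Delta_Y$ on $Y$ with $(Y,\Delta_Y)$ klt and making the canonical bundle formula match requires care. The clean way is to invoke Theorem \ref{thm:pullbackAmbro} more systematically — so that $B$ and $M$ are genuine pullbacks, not just pullbacks up to the b-divisor correction — and to define $\Delta_Y:=B_Y+M_{Y,\mathrm{mod}}$ directly on $Y$, where $M_{Y,\mathrm{mod}}$ is chosen so that $\tau^*M_{Y,\mathrm{mod}}$ matches $\rho^*\Delta^+_{\mathrm{mod}}$ modulo $\Q$-linear equivalence; then one must verify that this $M_{Y,\mathrm{mod}}$ is effective and keeps $(Y,B_Y+M_{Y,\mathrm{mod}})$ klt, which again reduces to the klt-ness of $(Y^+,\Delta_{Y^+})$ via inversion of adjunction. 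The non-constructive loss of control over the coefficients of $\Delta_Y$, mentioned in the introduction, enters precisely in the choice of $\Delta^+_{\mathrm{mod}}$ via Kodaira's lemma.
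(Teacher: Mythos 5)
Your overall strategy is the same as the paper's --- use Theorem~\ref{ambro1}(iii) to get a big nef moduli divisor $M_{Y^+}$, perturb it to an effective divisor via Kodaira's lemma, and transport it back to $Y$ --- but you stumble at exactly the point you flag as the ``main obstacle,'' and the fix you propose does not close the gap. You choose $\Delta^+_{\mathrm{mod}}$ on $Y^+$ and then want $\rho^*\Delta^+_{\mathrm{mod}}$ to equal $\tau^*$ of a divisor on $Y$, but there is no reason for $\rho^*\Delta^+_{\mathrm{mod}}$ to be a $\tau$-pullback; the ``compatible with the b-divisor structure'' and ``descends along the finite-type map $\tau$'' steps are not theorems that apply here, and your suggested cleanup (``choose $M_{Y,\mathrm{mod}}$ so that $\tau^*M_{Y,\mathrm{mod}}$ matches $\rho^*\Delta^+_{\mathrm{mod}}$'') is circular, since producing such an $M_{Y,\mathrm{mod}}$ is precisely the problem.

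The paper resolves this by performing the perturbation on $W$ itself, not on $Y^+$, and then descending by \emph{pushforward and averaging}. Concretely: after arranging (by Stein factorisation of $\sigma\colon W\to Y'$ plus Theorem~\ref{thm:pullbackAmbro}) that $W$ is an Ambro model, inversion of adjunction gives that $(W,B_W)$ is klt; since $M_W\sim_\Q\rho^*M_{Y^+}$ with $M_{Y^+}$ nef and big, Kodaira's trick together with Bertini produces an effective $\Q$-divisor $E_W$ on $W$ with $M_W\sim_\Q E_W$ and $(W,B_W+E_W)$ klt. Now one sets $E_{Y'}=\frac{1}{\deg\sigma}\,\sigma_*E_W$, so that $M_{Y'}\sim_\Q E_{Y'}$ by the projection formula and $K_W+B_W+E_W\sim_\Q\sigma^*(K_{Y'}+B_{Y'}+E_{Y'})$; then $(Y',B_{Y'}+E_{Y'})$ is klt by \cite[Proposition~5.20]{KM98} applied to the finite part of the Stein factorisation of $\sigma$, and pushing forward along $\pi\colon Y'\to Y$ gives $\Delta_Y$. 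Your instinct to define $\Delta_Y=B_Y+M_{Y,\mathrm{mod}}$ is correct --- but $M_{Y,\mathrm{mod}}$ must be constructed on $Y$ by pushing forward a divisor chosen on $W$, not by trying to match divisors chosen independently on $Y^+$ and $Y$. If you replace your last two paragraphs with this pushforward-and-averaging step, the argument is complete.
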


\begin{proof}
We use the notation from Theorem \ref{ambro1}, which clearly applies in our situation. We have the base change diagram
$$
\xymatrix{
(W,\Delta_W) \ar[r]^{w} \ar[d]_{f_W} & (X,\Delta)\ar[d]^{f}\\
W \ar[r]_{\tau}&Y.
}
$$
We may additionally assume that $\tau$ factors through an Ambro model $\pi\colon Y'\to Y$ of $f$, and denote by $\sigma\colon W\to Y'$ the induced morphism. By replacing $W$ by a suitable log resolution, by an easy argument with the Stein factorisation of $\sigma$ together with Theorem \ref{thm:pullbackAmbro} we may assume that $W$ is an Ambro model of $f_W$. 

Now, $\tau^*D\sim_\Q K_W+B_W+M_W$, and by the inversion of adjunction, Theorem \ref{thm:invAdjunction}, the pair $(W,B_W)$ is klt. Since the divisor $M_{Y^+}$ is nef and big, by using a version of Kodaira's trick \cite[Proposition 2.61]{KM98} together with Bertini's theorem, we may find an effective $\Q$-divisor $E_W$ on $W$ such that $M_W\sim_\Q E_W$ and such that the pair $(W,B_W+E_W)$ is klt. 

By Theorem \ref{thm:pullbackAmbro} we have $K_W+B_W\sim_\Q \sigma^*(K_{Y'}+B_{Y'})$ and $M_W\sim_\Q \sigma^*M_{Y'}$. Hence, if we denote $E_{Y'}=\frac{1}{\deg\sigma}\sigma_*E$, we have $M_{Y'}\sim_\Q E_{Y'}$ and
$$K_W+B_W+E\sim_\Q \sigma^*(K_{Y'}+B_{Y'}+E_{Y'}).$$
Then the pair $(Y',B_{Y'}+E_{Y'})$ is klt by \cite[Proposition 5.20]{KM98}. Setting 
$$\Delta_Y=\pi_*(B_{Y'}+E_{Y'})=B_Y+\pi_*E,$$
we have $K_Y+\Delta_Y\sim_\Q D$, and $(Y,\Delta_Y)$ is a klt pair. Since $\Delta$ is effective, the divisor $B_Y$ is effective, thus $\Delta_Y$ is effective.
\end{proof}

Finally, we mention that Theorem \ref{ambro1}(i)-(iii) was generalised to lc-trivial fibrations with $\Delta\geq0$ and $(X,\Delta)$ is log canonical in \cite[Lemma 1.1]{FG14}. The proof involves running a careful MMP in order to reduce to a situation where one has a klt-trivial fibration.


\begin{exa}
Theorem \ref{ambro1}(iv) does not hold for lc-trivial fibrations. For instance, let $X=\PS^1\times\PS^1$, let $f$ be the second projection, let $\delta$ be the diagonal and set $\Delta=\delta+\frac{1}{2}\{0\}\times\PS^1+\frac{1}{2}\{\infty\}\times\PS^1$. Then $f\colon(X,\Delta)\to \PS^1$ is an lc-trivial fibration with discriminant supported on $\{0\}\cup\{\infty\}$. By considering log resolutions, one calculates that the discriminant is equal to $\frac{1}{2}0+\frac{1}{2}\infty$, hence the moduli divisor is torsion. Indeed, we have
$$\textstyle K_X+\Delta\sim_{\Q}f^*\big(K_{\PS^1}+\frac{1}{2}0+\frac{1}{2}\infty+M_{\PS^1}\big),$$
but the divisor $K_X+\Delta$ has bi-degree $(0,-1)$.
 
However, $f$ is not birational to a product. Indeed, it induces a family of rational curves with 4 marked points parametrised by $\PS^1$: for $t\in \PS^1$, set $p_1(t)=0$, $p_2(t)=1$, $p_3(t)=p_4(t)=t$. This family of marked curves is not trivial, therefore the fibration cannot be locally a product.
\end{exa}

\section{B-Semiampleness conjectures}

We saw above in Theorem \ref{thm:kltdescent} that klt singularities descend along a klt-trivial fibration. However, even if one had the full analogue of Theorem \ref{ambro1} in the log canonical setting one could not follow the same strategy to show that log canonical singularities descend. Moreover, one sees that the use of Kodaira's trick in the proof of Theorem \ref{thm:kltdescent} obliterated the connection of the coefficients of the divisor $\Delta_Y$ to that of the divisor $\Delta$. In order to remedy the situation, something more is needed.

The main conjecture on the canonical bundle formula predicts something much stronger: that the moduli part is \emph{semiample} on an Ambro model of the fibration. We discuss it in this section.

There are two versions of the conjecture; the stronger one was proposed in \cite[Conjecture 7.13.3]{PS09}. We start with the stronger, \emph{effective} version.

\begin{EffbSemCon}\label{effbsemi}
Fix positive integers $d$ and $r$. Then there exists an integer  $m$ depending only on $d$ and $r$, such that for any lc-trivial fibration $f\colon (X,B)\rightarrow Y$ with the generic fibre $F$, if  $\dim F=d$ and $r$ is the smallest positive integer such that $r(K_F+B|_F)\sim 0$, there exists an Ambro model $Y'$ of $f$ such that $mM_{Y'}$ is base point free. 
\end{EffbSemCon}

More generally, any conjecture as above, in which $m$ depends on some invariants of the generic fibre of the fibration, goes under the name of \emph{effective b-semiampleness}. 

In the original statement \cite[Conjecture 7.13.3]{PS09}, the constant $m$ depended on $\dim X$ and $r$. The main result of \cite{Flo14} is that it suffices to show the Effective B-Semiampleness Conjecture in the case where $Y$ is a curve.
This led to the formulation of the Effective B-Semiampleness Conjecture above.

The conjecture is widely open. We list below the cases where it is known. They all make use of some notion of moduli space for the fibres.

\begin{thm}\label{thm:eff}
The Effective B-Semiampleness Conjecture holds in the following cases:
\begin{enumerate}
\item if general fibres are elliptic curves by \cite{Ko60, Ue73}; we have $m=12$;
\item if general fibres are rational curves \cite[Theorem 8.1]{PS09}; 
\item if general fibres are K3 surfaces or abelian varieties of dimension $d$ by \cite[Theorem 1.2]{Fuj03}; then we  have $m=19k$, respectively $m=k(d+1)$, where $k$ is a weight associated to the Baily-Borel-Satake compactification of the period domain.
\end{enumerate}
\end{thm}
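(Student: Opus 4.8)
The plan is to handle all three cases through one common mechanism. In each of them the fibres of $f$ carry a natural extra structure — a polarized Hodge structure of the appropriate weight, or, in the rational curve case, a configuration of weighted marked points — which, on a suitable Ambro model $Y'$ of $f$, gives rise to a moduli (or period) morphism $\Phi\colon Y'\to\overline{\mathcal M}$ to a compactified moduli space, together with an identification of $M_{Y'}$, up to $\Q$-linear equivalence, with a fixed rational multiple $\frac1c\Phi^*\mathcal L$ of the pullback of an ample $\Q$-line bundle $\mathcal L$ on $\overline{\mathcal M}$. Granting this, the integer $m$ is the product of $c$ with a power $k$ for which $\mathcal L^{\otimes k}$ is base point free on $\overline{\mathcal M}$: indeed $mM_{Y'}\sim_\Q\Phi^*(\mathcal L^{\otimes k})$ is then base point free. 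Since both $c$ and $k$ depend only on the relevant invariants of the fibre, so does $m$. By \cite[Theorem 1.2]{Flo14} we may moreover assume $Y$, and hence $Y'$, to be a curve, which makes the compactification arguments easier, as one then only has to extend $\Phi$ over finitely many punctures; this reduction is however not indispensable in any of the three cases.

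For (1), after base change to an Ambro model and passage to a relatively minimal elliptic surface as in Example \ref{ellfibr}, Kodaira's canonical bundle formula gives $12M_{Y'}\sim j^*\OO_{\PS^1}(1)$, with $j\colon Y'\to\PS^1=\overline{M}_{1,1}$ the $j$-invariant; as $\OO_{\PS^1}(1)$ is base point free, $m=12$ works. For (2), the hypothesis $r(K_F+B|_F)\sim0$ with $F\cong\PS^1$ forces $\deg(B|_F)=2$, so that $B|_F$ defines a weighted configuration of marked points on the generic fibre, whose combinatorial complexity is bounded in terms of $r$. After a generically finite base change resolving the monodromy of these points and passing to the associated family of weighted stable genus-$0$ curves, one obtains a morphism to a moduli space of such curves, and a direct intersection-theoretic computation on it — carried out in \cite[Theorem 8.1]{PS09} — identifies $M_{Y'}$ with the pullback of an explicit nef and big combination of $\psi$-classes and boundary divisors; keeping track of the degree of the base change and of the denominators of the weights then yields an explicit $m=m(r)$.

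For (3) the fibres are polarized K3 surfaces, respectively abelian varieties of dimension $d$, so $R^{\dim X-\dim Y}f'_*\C_{X'}$ underlies a polarized variation of Hodge structure — of weight $2$ with $h^{2,0}=1$ in the K3 case, of weight $1$ and rank $2d$ in the abelian case. Performing, as in the sketch following Theorem \ref{nefness}, a Kawamata cover that makes the monodromies unipotent and then a log resolution, the nilpotent orbit theorem together with Borel's extension theorem show that the period map extends to a morphism $\Phi\colon Y'\to\overline{D/\Gamma}$ to the Baily--Borel--Satake compactification of the relevant period domain. The Hodge-theoretic description of the moduli part — $\OO_{Y'}(M_{Y'})$ is a quotient of $f'_*\omega_{X'/Y'}$, which by \cite[Theorem 5]{Kaw81} is the canonical extension of the bottom nonzero piece of the Hodge filtration and which on fibres computes $H^{2,0}$ of a K3 (a line) respectively $\bigwedge^d$ of the rank-$d$ Hodge bundle of an abelian variety — shows that $M_{Y'}$ is a fixed rational multiple of $\Phi^*\mathcal L$, where $\mathcal L$ is the automorphic $\Q$-line bundle of modular forms on $\overline{D/\Gamma}$; the Baily--Borel theorem provides an explicit weight $k$ with $\mathcal L^{\otimes k}$ very ample, and combining the two constants gives $m=19k$, respectively $m=k(d+1)$, as in \cite[Theorem 1.2]{Fuj03}.

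The hard part, and really the only nontrivial point shared by the three cases, is the second half of this mechanism: showing that the Hodge-theoretically defined moduli divisor $M_{Y'}$ is, up to a controlled rational multiple, the pullback of the \emph{tautological} (automorphic) line bundle on the compactified moduli space, and that a controlled power of that line bundle is base point free there. In case (1) this is classical, due to Kodaira and Ueno; in case (2) it is an explicit but delicate computation on a moduli space of pointed rational curves; and in case (3) it rests simultaneously on the nilpotent orbit theorem, on Borel's extension theorem and on the Baily--Borel theorem, together with the precise matching of $f'_*\omega_{X'/Y'}$ — the canonical extension of the bottom Hodge piece — with the Hodge bundle on $\overline{D/\Gamma}$. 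Once that identification is secured, the effective constants $12$, the bound of \cite[Theorem 8.1]{PS09}, and $19k$ and $k(d+1)$ are a matter of tracking the rational multiples above and invoking known effective very-ampleness statements for modular forms, and the uniformity of $m$ in $d$ and $r$ is then automatic.
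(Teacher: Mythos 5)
Theorem~\ref{thm:eff} in the paper is a survey statement: its ``proof'' consists entirely of pointing to \cite{Ko60,Ue73}, \cite[Theorem 8.1]{PS09} and \cite[Theorem 1.2]{Fuj03}, and the surrounding text makes clear that in each of those works the constant $m$ is extracted from a map to a compactified moduli or period space (``They all make use of some notion of moduli space for the fibres''). So there is no proof in the paper to compare against in the strict sense.

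What you have written is, correctly, the common mechanism behind all three citations: on a suitable Ambro model one produces a classifying morphism to a projective moduli or period space, one identifies $M_{Y'}$ up to a bounded rational multiple with the pullback of a natural $\Q$-line bundle there, and one invokes effective base-point-freeness of a power of that bundle. Your case (1) is literally Kodaira's formula as recalled in Example~\ref{ellfibr}; your case (3) matches the Hodge-theoretic description of $\OO_{Y'}(M_{Y'})$ given in the sketch after Theorem~\ref{nefness} (canonical extension of the bottom Hodge piece, Kawamata cover to unipotent monodromy, Borel extension, Baily--Borel), which is precisely how \cite{Fuj03} obtains $m=19k$ and $m=k(d+1)$. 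Two small caveats. First, in case (2) the argument of \cite[Theorem 8.1]{PS09} is usually presented via cyclic covers reducing to the elliptic case rather than via an intersection-theoretic computation on a moduli space of weighted pointed rational curves; your route is plausible but is not the one actually carried out there, so you should either align with the reference or flag the deviation. Second, you should double-check the citation \cite[Theorem 1.2]{Flo14} for the reduction to one-dimensional base: in the paper that result is attributed to ``the main result of \cite{Flo14}'', whereas Theorem~1.2 there is quoted for the numerically trivial case. Beyond those, and beyond the fact that (as you yourself acknowledge) the core identification of $M_{Y'}$ with the tautological bundle is precisely what the cited papers prove and is not re-derived here, your account is a faithful and useful expansion of the survey statement.
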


In the weaker version of the conjecture we lose control on the constant $m$.

\begin{bSemCon}\label{bsemi}
Let $f\colon (X,B)\rightarrow Y$ be an lc-trivial fibration. Then there exists an Ambro model $Y'$ of $f$ such that $M_{Y'}$ is semiample. 
\end{bSemCon}

More is known about this conjecture than about its effective version above, although the progress has been limited to the cases where either the bases $Y$ or the fibres of $f$ are low dimensional. We summarise the situation for low dimensional fibres in the following result.

\begin{thm}
Apart from the cases listed in Theorem \ref{thm:eff}, the B-Semi\-am\-ple\-ness Conjecture holds in the following cases:
\begin{enumerate}
\item if the fibres are surfaces of Kodaira dimension 0 by \cite[Lemma 4.1 and Corollary 6.4]{Fuj03} and \cite[Part I, (5.15.9)(ii)]{Mo87};
\item if $f$ is a klt-trivial fibration and the generic fibre is a uniruled surface not isomorphic to $\PS^2$ by \cite[Theorem 1.7]{Fil18}.
 \end{enumerate} 
 \end{thm}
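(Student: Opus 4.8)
The plan is to reduce, in each case, to a fibration whose fibres are controlled by a usable moduli space, exploiting throughout the base change machinery of \S\ref{subsec:basechange}: on sufficiently high models the moduli part is a b-divisor compatible with generically finite base change (Theorems \ref{nefness} and \ref{thm:pullbackAmbro}), and semiampleness of a $\Q$-Cartier divisor can be tested after a finite surjective base change.

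For case (1), I would first apply the standard reductions --- including a relative minimal model program over $Y$ --- to assume that $(X,B)$ is log canonical, that $B$ is effective over the generic point of $Y$, and that a general fibre $F$ is a minimal surface of Kodaira dimension $0$ with $B|_F=0$; the last is automatic, since $-K_F\sim_\Q B|_F\geq 0$ together with $\kappa(F)=0$ forces $B|_F=0$, and a surface with $K_F\sim_\Q 0$ is minimal. By the Enriques-Kodaira classification $F$ is then a K3 surface, an abelian surface, an Enriques surface, or a bielliptic surface. If $F$ is K3 or abelian, the conclusion --- in fact its effective form --- is Theorem \ref{thm:eff}(3): on an Ambro model $Y'$, the moduli map to the Baily-Borel-Satake compactification of the relevant period domain pulls an ample line bundle back to a fixed multiple of $M_{Y'}$. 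If $F$ is Enriques or bielliptic, I would pass to the natural finite \'etale cover of the fibres trivialising the canonical class --- concretely, to a cyclic cover of $X$ of the type used in the proof of Theorem \ref{nefness}, possibly after a generically finite base change $\rho\colon\widetilde Y\to Y$ --- obtaining an lc-trivial fibration $\widetilde f\colon\widetilde X\to\widetilde Y$ whose general fibre is a K3 surface, respectively an abelian surface, with $M_{\widetilde Y}=\rho^*M_Y$ on high models; the degree of the cover is bounded, being the order of $K_F$ in $\Pic(F)$, which is at most $6$ by Mori's classification \cite[Part~I, (5.15.9)(ii)]{Mo87}. Applying the previous case to $\widetilde f$ and then descending semiampleness along $\widetilde Y'\to Y'$ finishes case (1).

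For case (2), I would reduce to the situation where $f$ is klt-trivial, $B$ is effective over the generic point of $Y$, and a general fibre $(F,B|_F)$ is a klt log Calabi--Yau pair with $F$ a uniruled surface, $F\not\cong\PS^2$. Such an $F$ is necessarily rational --- a ruled surface over a curve of positive genus admits no effective $\Q$-divisor $\Q$-linearly equivalent to $-K_F$ --- and, being a rational surface of Picard number at least $2$, which is precisely where the hypothesis $F\not\cong\PS^2$ enters, it admits, after a birational modification absorbed into a model of $X$ over $Y$, a fibration onto a curve with general fibre $\PS^1$. Globalising this and running a relative minimal model program over $Y$ to put the boundary into the right shape, I would obtain a tower $X\to Z\to Y$ in which $g\colon(X,B')\to Z$ is a klt-trivial fibration with general fibre $\PS^1$. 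By the rational curve case (Theorem \ref{thm:eff}(2)), the moduli part $M_Z$ of $g$ is semiample on an Ambro model; choosing an effective representative of $M_Z$ turns $Z\to Y$ into an lc-trivial fibration with one-dimensional fibres, whence Theorem \ref{thm:eff}(1)--(2), combined with the compatibility of the discriminant and moduli parts along the tower $X\to Z\to Y$, gives semiampleness of $M_{Y'}$ on a common Ambro model.

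The main obstacle, in both cases, is not the existence of the relevant moduli spaces but the passage from the fibrewise constructions --- the canonical covers in case (1), the conic bundle structure and the relative MMP in case (2) --- to global ones over $Y$, while keeping precise control of how the moduli b-divisor transforms; this is exactly what the base change results of \S\ref{subsec:basechange} supply. In case (2) the genuinely delicate point is to realise the intermediate base $Z$ together with an honest klt-trivial structure $g\colon(X,B')\to Z$ through a careful relative MMP, and then to reconcile the discriminant and moduli parts of the tower $X\to Z\to Y$ with those of $f$ itself; this bookkeeping is the technical core of \cite{Fil18}. The exclusion of $\PS^2$ is structural: having Picard number $1$, it admits no fibration onto a curve, so the reduction to one-dimensional fibres is unavailable.
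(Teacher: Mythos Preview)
The paper does not supply its own proof of this theorem: it is a survey, and the statement is recorded as a summary of results proved elsewhere, with the arguments residing entirely in the cited references \cite{Fuj03}, \cite{Mo87}, and \cite{Fil18}. So there is no ``paper's own proof'' to compare against; your sketch is in effect a reconstruction of what those references do, and on the whole it captures their strategies accurately --- the reduction via canonical covers to the K3/abelian case in (1), and the tower $X\to Z\to Y$ with $\PS^1$-fibres over $Z$ in (2).

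One correction in case (2): your assertion that the generic fibre $F$ must be rational is false as stated. The surface $\PS^1\times E$ with $E$ elliptic is uniruled, not rational, and $-K_{\PS^1\times E}$ is the class of two horizontal sections, which is visibly effective; one can then build klt log Calabi--Yau boundaries on it with coefficients below $1$. What is true --- and what your argument actually needs --- is that any smooth projective uniruled surface not isomorphic to $\PS^2$ admits, after running a minimal model program, a Mori fibre space structure onto a curve. The base curve can have positive genus, so the second step $Z\to Y$ of your tower may have elliptic fibres rather than rational ones; this is harmless, since Theorem \ref{thm:eff} covers both. With that adjustment your outline is sound, though you should be aware that the honest work in \cite{Fil18} lies precisely in the bookkeeping you flag at the end: constructing $Z$ globally and matching the moduli b-divisors along the tower.
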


The B-Semiampleness Conjecture holds for another important family of fibrations:

\begin{thm}\label{thm:torsionAmbro}
Let $f\colon(X,\Delta)\to Y$ be an lc-trivial fibration, and assume that the moduli part $M_Y$ descends to $Y$. If $M_Y\equiv0$, then $M_Y\sim_\Q 0$.

In particular, if $\dim Y=1$, then $M_Y$ is semiample.
\end{thm}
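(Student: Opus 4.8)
The plan is to reduce the statement to an application of a known result that numerical triviality implies torsion for divisors that arise as moduli parts. The key observation is that the moduli b-divisor, when it descends to $Y$, is not just any nef divisor: by Theorem~\ref{nefness}(ii) (and even more by the metric statement) it is the pushforward of a quotient of a Hodge bundle after a generically finite base change, and such divisors satisfy strong positivity and rigidity properties. First I would perform a generically finite base change $\rho\colon Y'\to Y$ by a Kawamata cover, as in the sketch of the proof of Theorem~\ref{nefness}, so that after possibly replacing $Y$ by a higher birational model (still denoted $Y$, still an Ambro model) the moduli divisor $M_{Y'}$ is a Cartier divisor and $\OO_{Y'}(M_{Y'})$ is the bottom piece of a Hodge filtration of a polarised variation of Hodge structure with unipotent monodromy, extended as the canonical (Deligne) extension. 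Since $\rho^*M_Y \sim_\Q M_{Y'}$ up to birational modification — here one should invoke the b-divisor property together with Theorem~\ref{thm:pullbackAmbro} in the klt case, or the corresponding statement for lc-trivial fibrations — it suffices to prove $M_{Y'}\sim_\Q 0$, because then $r\rho^*M_Y \sim rM_{Y'}\sim 0$ for some $r>0$, and pushing forward by $\rho$ (and dividing by $\deg\rho$) gives $M_Y\sim_\Q 0$.

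Next I would exploit numerical triviality. The hypothesis $M_Y\equiv 0$ pulls back to $M_{Y'}\equiv 0$. Now a numerically trivial Cartier divisor that underlies a Hodge-theoretic canonical extension is forced to be torsion: concretely, $\OO_{Y'}(M_{Y'})$ is a numerically trivial line bundle which is a quotient (or in the lc case a suitable subquotient) of a sheaf whose determinant, by Deligne's theorem on the semisimplicity and the vanishing of Chern classes for canonical extensions of polarised VHS with unipotent monodromy, has torsion first Chern class; more precisely, a nef line bundle that is a quotient of the canonical extension $f'_*\omega_{X'/Y'}$ and has vanishing top self-intersection against every curve must be flat, and a flat line bundle given by a representation of $\pi_1$ that is a subquotient of a $\Z$-VHS with finite monodromy on the relevant graded piece is torsion in $\Pic(Y')$. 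Alternatively, and perhaps more cleanly, one can cite that on an Ambro model the pair $(Y',B_{Y'}+tM_{Y'})$ behaves well under the canonical bundle formula machinery and $M_{Y'}$ is itself a moduli part of an induced lc-trivial fibration, so its numerical triviality gives $\Q$-linear triviality by a bootstrapping argument.

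The main obstacle I anticipate is the last step: upgrading ``numerically trivial and nef, sitting inside a Hodge bundle'' to ``torsion''. Nefness alone does not give torsion in general — one genuinely needs the extra rigidity coming from the variation of Hodge structure, specifically that the monodromy of the relevant graded quotient is finite (which is where the integrality of the period map and the boundedness of the $j$-type invariant enter). For $\dim Y'$ arbitrary this requires care, but the structure of the argument does not change. I would therefore isolate this as a lemma: if $L$ is a nef Cartier divisor on a smooth projective $Y'$ with $L\equiv 0$ and $\OO_{Y'}(L)$ is a direct summand of the canonical extension of a polarisable $\Z$-VHS with unipotent monodromies, then $L$ is torsion. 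Granting this lemma, the theorem follows as above.

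For the final sentence, once $M_Y\sim_\Q 0$ is established under $M_Y\equiv 0$, the case $\dim Y=1$ is immediate: on a curve every nef $\Q$-divisor of degree zero is numerically trivial, and every nef $\Q$-divisor of positive degree is ample, hence semiample; by Theorem~\ref{nefness}, after passing to an Ambro model $M_Y$ is nef, so in either case $M_Y$ is semiample.
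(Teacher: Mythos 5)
The paper does not give a proof of this statement; it cites \cite[Theorem 3.5]{Amb05a} for the klt case and \cite[Theorem 1.3]{Flo14} for the lc case. Your proposal takes a genuinely different route from those references, and in its present form it has a gap.

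Your plan is purely Hodge-theoretic: pull back to a Kawamata cover so that $\OO_{Y'}(M_{Y'})$ becomes a quotient of the canonical extension of a polarisable $\Z$-VHS, then argue that a numerically trivial nef quotient line bundle of such a sheaf must be torsion. You yourself flag the crux -- upgrading ``nef, numerically trivial, sitting in a Hodge bundle'' to ``torsion'' -- and you isolate it as a lemma. That lemma is not proved, and as stated (``direct summand'') it is not even what the construction gives you: the moduli sheaf is a quotient, not a direct summand, of $f'_*\omega_{X'/Y'}$. To make the argument work you would need to invoke the Fujita/Catanese--Kawamata type splitting of $f'_*\omega_{X'/Y'}$ into a ``positive'' part and a unitarily flat part, identify the numerically trivial quotient with a flat unitary subquotient, and then use integrality of the monodromy (a finitely generated subgroup of $\GL_n(\Z)$ with relatively compact image is finite) to conclude torsion. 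None of these steps is spelled out, and the alternative ``bootstrapping'' paragraph (treating $M_{Y'}$ as a moduli part of a further induced fibration) is not a coherent argument -- $M_{Y'}$ already \emph{is} the moduli part of $f'$; there is no new fibration being invoked.

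The route used in the references is both shorter and stays inside the canonical-bundle-formula machinery already developed in the paper. Starting from the cover structure of Theorem~\ref{ambro1}: there is a diagram $Y\xleftarrow{\tau}W\xrightarrow{\rho}Y^+$ with $\tau$ generically finite surjective, $\rho$ surjective, $\tau^*M_Y=\rho^*M_{Y^+}$ (after a birational base change), and crucially $M_{Y^+}$ nef and \emph{big}. If $M_Y\equiv 0$, then $\rho^*M_{Y^+}\equiv 0$, hence $M_{Y^+}\equiv 0$; but a nef and big divisor that is numerically trivial can only live on a $0$-dimensional base, so $Y^+$ is a point, $M_{Y^+}=0$, and therefore $\tau^*M_Y\sim_\Q 0$, which descends to $M_Y\sim_\Q 0$. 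The lc case of \cite[Theorem 1.3]{Flo14} reduces to this via MMP techniques in the spirit of \cite[Lemma 1.1]{FG14}. Your Hodge-theoretic approach, if carried out carefully, would reprove the klt case and would have the virtue of making the ``rigidity'' mechanism transparent, but it requires real work on the key lemma that your proposal leaves unproved. Your final paragraph on $\dim Y=1$ is fine: on a curve a nef $\Q$-divisor of degree $0$ is numerically trivial, hence torsion by the main statement, and one of positive degree is ample, so $M_Y$ is semiample either way.
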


Theorem \ref{thm:torsionAmbro} is  \cite[Theorem 3.5]{Amb05a} for klt-trivial fibrations and \cite[Theorem 1.3]{Flo14} for lc-trivial fibration. Theorem 1.2 in \cite{Flo14} states that Effective B-Semiampleness Conjecture is true for klt-trivial fibrations with numerically trivial moduli part.

Another partial result is \cite[Theorem 3.2]{BC16}. They prove that a small perturbation of the moduli part in a specific direction is semiample, under some effectivity hypotheses for $K_Y+B_Y$.

\subsection{Restrictions to divisors}

As we saw in the previous subsection, the progress on the B-Semiampleness Conjecture has been concentrated on the cases of either the low dimension of the base $Y$ or the low dimension of the generic fibre of the fibration $f$. 

In \cite[Theorem A]{FL18} we obtained the following result towards the conjecture valid in every dimension. Note that the phrase \emph{the B-semiampleness Conjecture in dimension $n$} means that we consider the conjecture in the case when the dimension of the base $Y$ is $n$.

\begin{thm}\label{thm:main}
Assume the B-Semiampleness Conjecture in dimension $n-1$.

Let $(X,\Delta)$ be a log canonical pair and let $f\colon (X,\Delta)\to Y$ be an lc-trivial fibration to an $n$-dimensional variety $Y$, where the divisor $\Delta$ is effective over the generic point of $Y$. Assume that $Y$ is an Ambro model for $f$. 

Then for every birational model $\pi\colon Y'\to Y$ and for every prime divisor $T$ on $Y'$ with the normalisation $T^\nu$ and the induced morphism $\nu\colon T^\nu\to Y'$, the divisor $\nu^*\pi^*M_Y$ is semiample on $T^\nu$.
\end{thm}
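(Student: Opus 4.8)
The idea is to realize the restriction $\nu^*\pi^*M_Y$ as (up to $\Q$-linear equivalence) the moduli part of an induced lc-trivial fibration over the $(n-1)$-dimensional variety $T^\nu$, and then invoke the B-Semiampleness Conjecture in dimension $n-1$. The natural candidate for such a fibration is obtained by base change: set $X_T$ to be the normalization of the main component of $X\times_Y T^\nu$ (or rather of $X\times_Y Y'\times_{Y'} T^\nu$, after first passing to the model $Y'$), with induced morphism $f_T\colon X_T\to T^\nu$ and $\Delta_T$ defined by the crepant pullback formula $K_{X_T}+\Delta_T=\tau_T^*(K_X+\Delta)$ as in diagram \eqref{eq:diag}. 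The first thing I would check is that $f_T$ is again an lc-trivial fibration: conditions (a), (b), (c) of Definition \ref{dfn:lctrivial} are essentially formal, and condition (d) follows from the base-change stability discussed in \S\ref{subsec:basechange}. The key point, then, is a \emph{divisorial adjunction} statement: that the moduli b-divisor of $f_T$ equals the restriction (via $\nu\circ$ the birational maps) of the moduli b-divisor of $f$. Equivalently, if $Y$ is an Ambro model for $f$ and $T$ is a prime divisor on $Y'$ (a birational model over $Y$), then after replacing $T^\nu$ by a high enough model the moduli part of $f_T$ descends there and coincides with $\nu^*\pi^*M_Y|$.

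**Key steps, in order.** First I would reduce to the case where $T$ is a divisor on $Y$ itself by the following observation: since $Y$ is an Ambro model and $M_Y$ descends to $Y$, we have $\pi^*M_Y=M_{Y'}$ as divisors on any model $Y'$, so the content is about restricting the moduli divisor of the base-changed fibration over $Y'$ to the prime divisor $T\subset Y'$; after relabeling, $Y'$ is the base and $T$ a prime divisor on it. Second, I would pass to a log resolution so that $(Y',B_{Y'}+T)$ is log smooth, and arrange by further blow-ups and a Kawamata cover (as in the proof sketch of Theorem \ref{nefness}) that we are on an Ambro model and the local systems have unipotent monodromy; here one must be careful that $T$ remains a prime divisor after these manipulations, which is where log-smoothness of $B_{Y'}+T$ is used. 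Third — the heart — establish the adjunction identity $M_{T^\nu}\sim_\Q \nu^*M_{Y'}$ (after descending both sides to a common model $\widetilde T$ over $T^\nu$). This should follow from the fact that on the Ambro model the moduli part is (the Hodge-theoretic) canonical extension of the bottom piece of a variation of Hodge structure, together with the compatibility of such canonical extensions with restriction to a divisor of the (snc) discriminant locus — essentially the content that makes the b-divisors behave well, plus Theorem \ref{thm:pullbackAmbro} to handle the cyclic-cover/Kawamata-cover bookkeeping. Fourth, having identified $\nu^*\pi^*M_Y$ with the (descended) moduli part of the lc-trivial fibration $f_T\colon (X_T,\Delta_T)\to T^\nu$ over the $(n-1)$-dimensional base $T^\nu$, apply the B-Semiampleness Conjecture in dimension $n-1$ to conclude $M_{T^\nu}$, hence $\nu^*\pi^*M_Y$, is semiample.

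**The main obstacle.** The delicate step is the third one — the divisorial adjunction for the moduli b-divisor, i.e.\ showing that restricting the fibration's moduli part to a prime divisor $T$ of the discriminant gives precisely the moduli part of the restricted fibration. Two subtleties drive this. First, $\Delta$ need only be effective over the generic point of $Y$, not globally, so when we restrict over $T$ the pair $(X_T,\Delta_T)$ may have $\Delta_T$ with negative coefficients and even the log canonical hypothesis over the generic point of $T^\nu$ needs justification — this is exactly why the hypothesis that $Y$ is an Ambro model (so that inversion of adjunction, Theorem \ref{thm:invAdjunction}, controls singularities of $(Y',B_{Y'})$ near $T$) is indispensable, and presumably one invokes the technology of \cite{FL18, FG14} to run an MMP making $\Delta_T$ effective without changing the moduli part. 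Second, matching the canonical extensions on the two sides requires that the variation of Hodge structure of $f_T$ over $T^\nu$ be the restriction of that of $f$ — true over the open locus where $f$ is smooth and $T$ meets it transversally, but one must control what happens over the locus where $T$ meets the bad set of $f$; this is handled by choosing the models so that $B_{Y'}+T$ is snc and the monodromies unipotent, reducing to a local computation with lower-canonical extensions of unipotent VHS along an snc divisor. I expect the write-up to lean heavily on the machinery of \cite{Kol07, Amb05a, FL18} for both points, and on Theorem \ref{thm:pullbackAmbro} to propagate everything through the auxiliary covers.
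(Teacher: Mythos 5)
Your high-level strategy is the right one — realize $\nu^*\pi^*M_Y$ as (related to) the moduli part of a fibration over $T^\nu$ and apply the conjecture in dimension $n-1$ — but the mechanism you propose for producing that fibration has a genuine gap, and the identity you want to prove in your third step is in fact false as stated.

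The central problem is the construction of the ``restricted fibration.'' You take the normalization $X_T$ of (a main component of) $X\times_Y T^\nu$ and define $\Delta_T$ via the crepant pullback formula $K_{X_T}+\Delta_T=\tau_T^*(K_X+\Delta)$ ``as in diagram \eqref{eq:diag}.'' But the base change formalism of \S\ref{subsec:basechange} is only set up for $\rho$ \emph{proper generically finite}; only then is $\tau$ generically finite and does $K_{X'}+\Delta'=\tau^*(K_X+\Delta)$ make sense via Riemann--Hurwitz. When you instead restrict to a divisor $T\subset Y'$, the map $X_T\to X$ is a closed immersion onto a divisor (up to normalization), not a generically finite cover, and the formula you wrote does not define a pair. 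The correct tool for a divisorial restriction is \emph{adjunction to a log canonical centre} of the total space, and for that you first need a log canonical centre of $(X,\Delta)$ (or of a modification of it) dominating $T$. This is exactly where the paper's proof goes: after adding $f^*G$ to $\Delta$ (which does not change $M_Y$) so that $T$ has coefficient $1$ in $B_Y$, and after running a relative MMP to replace $(X,\Delta)$ by $(W,\Delta_W)$ with $\Delta_W\geq 0$ and the same moduli divisor, there exists a \emph{minimal log canonical centre} $S$ of $(W,\Delta_W)$ surjecting onto $T$; adjunction to $S$ gives a genuine klt-trivial fibration $g|_S\colon (S,\Delta_S)\to T'$ over the Stein factorisation $T'$ of $S\to T$. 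The MMP step you mention only in the final ``obstacle'' paragraph as an afterthought is in fact a prerequisite for the construction, not a repair at the end, and it is run on the total space to make $\Delta$ effective, not on $\Delta_T$.

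A second issue is the expected identity $M_{T^\nu}\sim_\Q\nu^*M_{Y'}$. The paper explicitly states that even with the correct construction one only obtains that $M_Y|_{T'}$ is \emph{almost} $M_{T'}$: the two differ by a controlled error term, which is then killed by a further blow-up of $Y$ before invoking the inductive hypothesis. Your proposed Hodge-theoretic justification — that canonical extensions of the bottom piece of the VHS commute with restriction to a component of the snc boundary — would, if correct, give exact equality; it is precisely the failure of such exact compatibility that forces the paper's more careful bookkeeping. So the ``divisorial adjunction'' identity you isolate as the heart of the argument needs to be weakened to an approximate statement with an explicitly controlled discrepancy, and the inductive step must absorb that discrepancy after a further birational modification.
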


As a corollary, combining with Theorem \ref{thm:torsionAmbro}, we obtain that the restriction of the moduli part to every prime divisor on every sufficiently high birational model of $Y$ is semiample if $Y$ is a surface.

We comment on the proof of Theorem \ref{thm:main}, as it will be useful in the following subsection. We first apply a base change to $Y$ and modify $(X,\Delta)$ by blowing up suitably, but we try to remember $(X,\Delta)$ along the proof. We then run a suitable relative MMP over $Y$, which contracts many ``bad'' components of $\Delta$ (in particular, those with negative coefficients); as a result, we obtain a new lc-trivial fibration $g\colon (W,\Delta_W)\to Y$ with $\Delta_W\geq0$ and with the same moduli divisor $M_Y$. Choosing a minimal log canonical centre $S$ of $(W,\Delta_W)$ which surjects onto $T$, we obtain an induced \emph{klt}-trivial fibration $g|_S\colon(S,\Delta_S)\to T'$, where $T'$ is obtained from the Stein factorisation of the morphism $S\to T$. Then we first show that $M_Y|_{T'}$ is \emph{almost} $M_{T'}$. Even though at this step we may not deduce equality between these two divisors, we can control their difference in a very precise manner. After a suitable further blowup of $Y$, we can force this difference to disappear and we conclude by induction on the dimension.

\subsection{Reduction result}

As we mentioned above, in the setup of lc-trivial fibrations $f\colon(X,\Delta)\to Y$ one does not assume that $\Delta$ is effective. Furthermore, often it is much more difficult to work with lc-trivial fibrations than with klt-trivial fibrations.

In \cite{FL18} the B-Semiampleness Conjecture is reduced to the following conjecture with much weaker hypotheses.

\begin{con}\label{con:weakbsemi}
Let $(X,\Delta)$ be a log canonical pair and let $f\colon (X,\Delta)\to Y$ be a klt-trivial fibration to an $n$-dimensional variety $Y$. If $Y$ is an Ambro model of $f$ and if the moduli divisor $M_Y$ is big, then $M_Y$ is semiample.
\end{con}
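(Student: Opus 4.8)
The obvious attempt -- applying the Kawamata--Shokurov base-point-free theorem directly to $M_Y$ -- fails, because one has no control of $K_Y$ relative to $M_Y$: the only relation available on $Y$ is $K_Y+B_Y+M_Y\sim_\Q D$, where $D$ is the uncontrolled $\Q$-Cartier divisor with $K_X+\Delta\sim_\Q f^*D$, and nothing prevents $-K_Y$ from being very negative, so $aM_Y-K_Y$ need not be nef for any $a$. The plan is instead to \emph{detect semiampleness of $M_Y$ on its augmented base locus}. Since this conjecture is meant as the inductive step toward the full B-Semiampleness Conjecture, I argue by induction on $\dim Y=n$, so that the B-Semiampleness Conjecture may be assumed in dimension $n-1$; in particular Theorem~\ref{thm:main} is at our disposal. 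Passing to a higher Ambro model obtained from a resolution we may assume $Y$ is smooth (the moduli divisor pulls back by Theorem~\ref{nefness}(ii), and semiampleness descends along birational morphisms), and by the standard reductions -- using that only $M_Y$ matters, not the precise pair $(X,\Delta)$, cf.\ \cite[Remark 3.6]{FL18} -- we may assume that $\Delta$ is effective over the generic point of every prime divisor of $Y$, which is the hypothesis required by Theorem~\ref{thm:main}.

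Since $M_Y$ is nef and big, $\mathbf{B}_+(M_Y)$ is a \emph{proper} closed subset of $Y$, and by Nakamaye's theorem it coincides with the null locus $\Null(M_Y)$, a finite union of positive-dimensional subvarieties $V$ with $\big(M_Y^{\dim V}\cdot V\big)=0$. For each such component $V$, choose a birational model $\pi\colon Y'\to Y$ carrying a prime divisor $T$ with $\pi(T)=V$ (e.g.\ by blowing up $V$ and resolving); by Theorem~\ref{thm:main} the divisor $\nu^*\pi^*M_Y$ is semiample on the normalisation $T^\nu$. Since the induced morphism $T^\nu\to V$ is surjective and $\nu^*\pi^*M_Y$ is its pullback of $M_Y|_V$, semiampleness descends and $M_Y|_V$ is semiample. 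Hence $M_Y|_{\mathbf{B}_+(M_Y)}$ is semiample.

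It remains to upgrade this to semiampleness of $M_Y$ on all of $Y$. The general principle one would aim to establish is: \emph{a big and nef $\Q$-Cartier divisor $D$ on a normal projective variety whose restriction to $\mathbf{B}_+(D)$ is semiample is itself semiample.} The natural route is a section-lifting argument: for $m$ sufficiently divisible the base-point-free system $\big|mD|_{\mathbf{B}_+(D)}\big|$ should lift to $|mD|$, and this would follow from the vanishing of $H^1\big(Y,\mathcal I_{\mathbf{B}_+(D)}\otimes\OO_Y(mD)\big)$; off $\mathbf{B}_+(D)$ the divisor $D$ is "ample-like", so one hopes to produce this vanishing by a Kawamata--Viehweg- or Nadel-type theorem. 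For the specific divisor $M_Y$ there is the extra input recorded after Theorem~\ref{nefness}, namely that $M_Y$ is a quotient of a Hodge bundle and carries a singular metric all of whose Lelong numbers vanish (\cite{FF17}); one would attempt to run the lifting inside this metric, where positivity is genuinely stronger than mere nefness.

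The main obstacle is precisely this last step. The assertion that semiampleness is detected on the augmented base locus is open already for arbitrary big and nef line bundles, and is closely tied to the abundance conjecture; moreover the vanishing of the Lelong numbers of the metric on $M_Y$ only gives that its multiplier ideal is trivial -- not the ideal $\mathcal I_{\mathbf{B}_+(M_Y)}$ that a direct lifting argument requires -- so the analytic input does not by itself close the gap. It is here that one must genuinely exploit the origin of $M_Y$ in the variation of Hodge structure of the fibres of $f$, presumably by constructing a moduli space of the log Calabi--Yau fibres, or a period map to a quotient of a period domain, from which $M_Y$ would be pulled back together with an ample class; no such mechanism is available in the generality of the conjecture, which is exactly why it remains open.
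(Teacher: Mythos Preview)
The statement you are attempting to prove is Conjecture~\ref{con:weakbsemi}: it is presented in the paper as an \emph{open conjecture}, not as a theorem, and the paper offers no proof of it. Its role in the paper is precisely the opposite of what you treat it as: Theorem~\ref{KLTimplLC} shows that Conjecture~\ref{con:weakbsemi} \emph{implies} the full B-Semiampleness Conjecture, so it is the (unproved) target of a reduction, not a result to be established. There is therefore nothing to compare your proposal against.

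Your own write-up is candid about this: the final two paragraphs explain that the decisive step---lifting sections from $\mathbf{B}_+(M_Y)$ to $Y$---is not available, that the required vanishing is not supplied by the triviality of the multiplier ideal coming from the Lelong-number-zero metric, and that what is missing is exactly a moduli/period-map construction that does not exist in this generality. So your proposal is not a proof but a discussion of obstructions, and you correctly identify the main one. A couple of smaller points are also worth flagging: first, knowing that $M_Y|_V$ is semiample for each irreducible component $V$ of $\mathbf{B}_+(M_Y)$ does not automatically give semiampleness of $M_Y$ restricted to the (possibly non-reduced, non-normal, reducible) scheme $\mathbf{B}_+(M_Y)$, since one must glue sections compatibly along intersections; second, the reduction allowing $\Delta$ to be effective over the generic point of \emph{every} prime divisor of $Y$ is stronger than what the cited remark provides---Theorem~\ref{thm:main} only requires effectivity over the generic point of $Y$. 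Neither of these is the real obstacle, but they would need attention in any genuine attempt.
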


We show in \cite[Theorem E]{FL18}:

\begin{thm}\label{KLTimplLC}
Assume Conjecture \ref{con:weakbsemi} in dimensions at most $n$. Then the B-Semiampleness Conjecture holds in dimension $n$.
\end{thm}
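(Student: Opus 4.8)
The goal is to show that Conjecture \ref{con:weakbsemi} in dimensions $\leq n$ implies the B-Semiampleness Conjecture in dimension $n$. The starting point is an lc-trivial fibration $f\colon(X,\Delta)\to Y$ with $\dim Y=n$ and, by the remark following Definition \ref{dfn:cbf} and by \cite[Remark 3.6]{FL18}, we may assume $(X,\Delta)$ is log canonical, although $\Delta$ need not be effective. We want to produce an Ambro model $Y'$ of $f$ on which $M_{Y'}$ is semiample. The strategy is to interpolate: first replace the lc-trivial fibration by a \emph{klt}-trivial one with the same moduli $\textbf{b}$-divisor by running a relative MMP to kill the bad part of $\Delta$, exactly as in the sketch of the proof of Theorem \ref{thm:main}; then invoke the structure result Theorem \ref{ambro1}(i)--(iii), which applies in the klt-trivial case with $\Delta\geq0$, to reduce to a klt-trivial fibration whose moduli divisor is \emph{big}; then apply Conjecture \ref{con:weakbsemi} to that fibration to get semiampleness; and finally descend the conclusion back to an Ambro model of the original $f$.

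\textbf{The steps in order.} First, pass to an Ambro model and, following the MMP reduction used in Theorem \ref{thm:main} and in \cite[Lemma 1.1]{FG14}, run a suitable relative MMP over $Y$ to obtain a klt-trivial fibration $g\colon(W,\Delta_W)\to Y$ with $\Delta_W\geq0$ such that the moduli $\textbf{b}$-divisor is unchanged; after a further birational base change we may assume $Y$ is an Ambro model for $g$ as well, so that $M_Y$ (for $g$) descends to $Y$ and is nef by Theorem \ref{nefness}. Second, apply Theorem \ref{ambro1} to $g$: we obtain a diagram with $\tau\colon W'\to Y$ generically finite surjective, $\rho\colon W'\to Y^+$ surjective, and a klt-trivial fibration $f^+\colon(X^+,\Delta^+)\to Y^+$ whose moduli divisor $M_{Y^+}$ is \emph{big}, with $\tau^*M_Y=\rho^*M_{Y^+}$ after possibly a birational base change. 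Since $\dim Y^+\le \dim Y=n$, and after replacing $Y^+$ by an Ambro model of $f^+$ (using Theorem \ref{thm:pullbackAmbro} to see that bigness and the equality of pullbacks are preserved), Conjecture \ref{con:weakbsemi} applies to $f^+$ and yields that $M_{Y^+}$ is semiample on this Ambro model. Third, descend: semiampleness of $M_{Y^+}$ gives semiampleness of $\rho^*M_{Y^+}=\tau^*M_Y$; since $\tau$ is proper surjective, semiampleness of $\tau^*M_Y$ implies semiampleness of $M_Y$ (semiampleness descends along proper surjective morphisms — this is elementary, or one may cite the analogous statement for the nef/metric properties used after Theorem \ref{nefness}). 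Hence $M_Y$ is semiample on the chosen Ambro model, which is what the B-Semiampleness Conjecture in dimension $n$ asserts.

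\textbf{The main obstacle.} The delicate point is the compatibility of \emph{Ambro models} and \emph{base changes} across the three fibrations $f$, $g$ and $f^+$: one must ensure that after all the blowups, base changes and MMP steps, the moduli $\textbf{b}$-divisor really is the same object throughout, that each intermediate fibration has been put on an Ambro model, and that the equalities $\tau^*M_Y=\rho^*M_{Y^+}$ (from Theorem \ref{ambro1}(iii), which holds only ``after possibly a birational base change'') hold on models where $M_Y$ and $M_{Y^+}$ genuinely descend. This bookkeeping — in particular verifying that the MMP reduction of \cite[Lemma 1.1]{FG14} leaves the moduli $\textbf{b}$-divisor untouched and that Theorem \ref{thm:pullbackAmbro} can be applied at each stage to control pullbacks of $B$ and $M$ — is where the real work lies; the logical skeleton (MMP $\Rightarrow$ klt-trivial, Ambro's structure theorem $\Rightarrow$ big moduli, Conjecture \ref{con:weakbsemi} $\Rightarrow$ semiample, descent along $\tau$) is otherwise straightforward. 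One also needs the elementary fact that semiampleness descends along a proper surjective morphism of normal projective varieties, which is standard.
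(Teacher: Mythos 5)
There is a genuine gap at the very first step, and it is the step that matters. You claim that running a relative MMP over $Y$ produces a \emph{klt}-trivial fibration $g\colon(W,\Delta_W)\to Y$ with $\Delta_W\geq0$. This is not what the MMP achieves. As in the sketch of the proof of Theorem~\ref{thm:main} (and as in \cite[Lemma 1.1]{FG14}), the relative MMP over $Y$ only contracts the components of $\Delta$ with negative coefficients, producing an \emph{lc}-trivial fibration with $\Delta_W\geq0$ and the same moduli $\mathbf b$-divisor; it does not and cannot change the type of singularity of the pair over the generic point of $Y$, which is precisely the difference between lc-trivial and klt-trivial (condition (b) of Definition~\ref{dfn:lctrivial}). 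If $(X,\Delta)$ has a log canonical centre dominating $Y$ — the only case in which there is anything to prove beyond the klt-trivial situation — then $(W,\Delta_W)$ still does, and $g$ is still only lc-trivial. Consequently Theorem~\ref{ambro1} in the form you invoke (which requires klt-trivial input) is not applicable, and Conjecture~\ref{con:weakbsemi} cannot be fed a klt-trivial fibration at this stage.

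The actual reduction to a klt-trivial fibration — the crux of the proof of Theorem~\ref{KLTimplLC}, and what makes it ``similar to the proof of Theorem~\ref{thm:main}'' — is done differently: after the MMP one picks a \emph{minimal log canonical centre} $S$ of $(W,\Delta_W)$ dominating $Y$ and uses adjunction to obtain an induced klt-trivial fibration $g|_S\colon(S,\Delta_S)\to Y'$ over the Stein factorisation $Y'\to Y$. The hard point, which your write-up relegates to ``bookkeeping'', is then to compare the moduli divisor of the original fibration (pulled back to $Y'$) with that of the restricted klt-trivial fibration $g|_S$: as the paper emphasises, at first one only shows that these agree ``almost'', one must control the difference very precisely, and one must pass to a further blowup of $Y$ to make them coincide before Conjecture~\ref{con:weakbsemi} can be applied. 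This comparison of moduli divisors across the lc-centre restriction is the missing ingredient in your proposal; without it, the logical skeleton ``MMP $\Rightarrow$ klt-trivial $\Rightarrow$ big moduli $\Rightarrow$ semiample $\Rightarrow$ descend'' does not get off the ground.
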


The proof is similar to the proof of Theorem \ref{thm:main} sketched above.

\subsection{Generalisation}

In the papers \cite{Fuj18,FFL18} the authors consider slc-trivial fibrations. Those are completely analogous to lc-trivial fibrations, the difference being that the ambient space $X$ is not irreducible, but the pair $(X,\Delta)$ is \emph{slc} on the generic fibre of the fibration; such a setup appears occasionally in inductive proofs. The precise statement is \cite[Definition 4.1]{Fuj18}.
%
%
%

Then one can define, as in the case of lc-trivial fibrations, the moduli divisor and the discriminant. Then \cite[Theorem 1.2]{Fuj18} proves the analogue of Theorem \ref{nefness} for slc-trivial fibrations, and \cite[Theorem 1.3]{FFL18} shows the analogue of Theorem \ref{thm:torsionAmbro} in this context.

\section{Parabolic fibrations}

Finally, in this section we discuss a more general situation than that of lc-trivial fibrations.

Let $g\colon (X,\Delta)\to Z$ be a surjective morphism, where $(X,\Delta)$ is a klt projective pair and $Z$ is a projective variety. Assume that $g_*\OO_X\big(m(K_X+\Delta)\big)\neq0$ for some positive integer $m$, and consider the relative Iitaka fibration $f\colon X\dashrightarrow Y$ associated to $K_X+\Delta$. Possibly by blowing up further, one may assume that $(X,\Delta)$ is log smooth and that $f$ is a morphism. Then if $F$ is a general fibre of $f$, we have $\kappa\big(F,(K_X+\Delta)|_F\big)=0$, however $K_X+\Delta$ is not necessarily a pullback from $Y$. One still wonders if there is a canonical bundle formula for the map $f$.

The resulting formula is the canonical bundle formula of Fujino and Mori \cite{FM00}. We first need a definition, which is justified from the setup above.

\begin{dfn}
A \emph{parabolic fibration} is a fibration $f\colon (X,\Delta)\to Y$, where $(X,\Delta)$ is a projective klt pair, $Y$ is a smooth projective variety and if $F$ is the generic fibre of $f$, then $\kappa\big(F,(K_X+\Delta)|_F\big)=0$.
\end{dfn}

The following is \cite[Section 4]{FM00}, the \emph{canonical bundle formula} of Fujino and Mori.

\begin{thm}\label{thm:FM}
Let $f\colon (X,\Delta)\to Y$ be a parabolic fibration, where $\Delta$ is effective. Then there is a commutative diagram
$$
\xymatrix{
X' \ar[r]^{\tau'} \ar[d]_{f'} & X\ar[d]^{f}\\
Y' \ar[r]_{\tau}&Y,
}
$$
where $\tau$ and $\tau'$ are birational, $X'$ and $Y'$ are smooth, and $f'$ has connected fibres, such that the following holds.

There exist effective $\Q$-divisors $B^+$ and $B^-$ on $X'$ without common components, a $\Q$-divisor $\Delta'\geq0$ on $X'$ and $\Q$-divisors $B_{Y'}$ and $M_{Y'}$ on $Y$ such that
$$K_{X'}+\Delta'+B^-\sim_\Q f'^*(K_{Y'}+B_{Y'}+M_{Y'})+B^+,$$
with the following properties:
\begin{enumerate}
\item[(i)] the pair $(X',\Delta')$ is klt and log smooth, and there exists an effective exceptional divisor $E$ on $X'$ such that
$$K_{X'}+\Delta'\sim_\Q\tau'^*(K_X+\Delta)+E,$$ 
\item[(ii)] $f'_*\OO_{X'}(\lfloor nB^+\rfloor)\simeq\OO_{Y'}$ for all $n\in\N$,
\item[(iii)] $B^-$ is $f'$-exceptional and $\tau'$-exceptional,
\item[(iv)] the induced map $f'\colon(X',\Delta'+B^--B^+)\to Y'$ is a klt-trivial fibration, and $B_{Y'}$ and $M_{Y'}$ are the corresponding discriminant and moduli divisors,
\item[(v)] the pair $(Y',B_{Y'})$ is klt, $B_{Y'}\geq0$ and $M_{Y'}$ is nef,
\item[(vi)] for every $n\in\N$ sufficiently divisible we have 
$$H^0\big(X,n(K_X+\Delta)\big)\simeq H^0\big(X',n(K_{X'}+\Delta')\big)\simeq H^0\big(Y',n(K_{Y'}+B_{Y'}+M_{Y'})\big).$$
\end{enumerate}
\end{thm}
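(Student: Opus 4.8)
The plan is to construct the diagram in stages, first extracting the relative Iitaka fibration in a log smooth model and then applying the canonical bundle formula machinery from earlier sections to the resulting klt-trivial fibration. First I would take a common log resolution: since $f\colon(X,\Delta)\to Y$ is a parabolic fibration, I replace $X$ by a log resolution $\tau'\colon X'\to X$ of $(X,\Delta)$ on which $f\circ\tau'$ becomes a morphism with connected fibres (using Stein factorisation, so $Y$ is replaced by a birational model $Y'$, which I may further blow up to be smooth with simple normal crossings discriminant later). On $X'$ I set $\Delta'$ to be the effective $\Q$-divisor defined by writing $K_{X'}+\Delta'\sim_\Q\tau'^*(K_X+\Delta)+E$ with $E$ effective and $\tau'$-exceptional, taking $\Delta'=\sum a(D_i)D_i$ over the prime divisors with $a(D_i,X,\Delta)>-1$; effectivity of $\Delta$ and the klt hypothesis guarantee $\Delta'\geq0$ and $(X',\Delta')$ klt and log smooth, giving (i).

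Next I would perform the divisorial decomposition that produces $B^+$ and $B^-$. Over the generic point of $Y'$ the pair $(X',\Delta')$ has Iitaka dimension zero for $K_{X'}+\Delta'$, so there is a unique effective $\Q$-divisor $\Xi$ on the generic fibre in the linear system of a multiple of $(K_{X'}+\Delta')|_F$; spreading this out and taking closures I obtain a horizontal effective divisor. I then define $B^+$ as the ``positive part'' and $B^-$ as the ``negative part'' of $(K_{X'}+\Delta')-f'^*D'$ for a suitable $\Q$-Cartier $D'$ on $Y'$, arranging (after further blowups of $Y'$, hence of $X'$) that $B^+$ and $B^-$ have no common components, that $B^-$ is both $f'$-exceptional and $\tau'$-exceptional, and that $f'_*\OO_{X'}(\lfloor nB^+\rfloor)\simeq\OO_{Y'}$ for all $n$ — this last condition is exactly the statement that the horizontal effective divisor we spread out is the full fixed part, and it is what makes the relative Iitaka fibration an honest fibration. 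This gives (ii) and (iii), and the relation $K_{X'}+\Delta'+B^-\sim_\Q f'^*(K_{Y'}+B_{Y'}+M_{Y'})+B^+$ then says precisely that $f'\colon(X',\Delta'+B^--B^+)\to Y'$ is lc-trivial, in fact klt-trivial since $\Delta'+B^--B^+$ is klt on the generic fibre by construction; defining $B_{Y'}$ and $M_{Y'}$ as the discriminant and moduli divisors of this klt-trivial fibration yields (iv).

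For (v) I would invoke the results of Section 2: after possibly replacing $Y'$ by an Ambro model (a further birational modification, which forces a corresponding modification of $X'$ — one keeps track of $(X',\Delta')$ throughout), Theorem \ref{nefness} gives that $M_{Y'}$ is nef and $K_{Y'}+B_{Y'}$ is $\Q$-Cartier, while the inversion of adjunction Theorem \ref{thm:invAdjunction} combined with the fact that $(X',\Delta'+B^--B^+)$ is klt over every point (which I arrange from $(X',\Delta')$ klt and $B^-$ being small, $f'$-exceptional) gives that $(Y',B_{Y'})$ is klt; effectivity $B_{Y'}\geq0$ follows since $\Delta'+B^--B^+$ is effective over the generic point of each prime divisor on $Y'$ after the blowups. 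Finally (vi) is the computation of sections: the first isomorphism $H^0(X,n(K_X+\Delta))\simeq H^0(X',n(K_{X'}+\Delta'))$ follows from (i) since $E$ is effective and $\tau'$-exceptional, and the second isomorphism $H^0(X',n(K_{X'}+\Delta'))\simeq H^0(Y',n(K_{Y'}+B_{Y'}+M_{Y'}))$ follows by pushing forward along $f'$ and using (ii): $f'_*\OO_{X'}(n(K_{X'}+\Delta'))=f'_*\OO_{X'}(n(f'^*(K_{Y'}+B_{Y'}+M_{Y'})+B^+-B^-))\simeq\OO_{Y'}(n(K_{Y'}+B_{Y'}+M_{Y'}))$ by the projection formula once $n$ is sufficiently divisible, since $\lfloor nB^+\rfloor$ has trivial pushforward and $B^-$ is effective and $f'$-exceptional so contributes nothing to global sections.

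The main obstacle I expect is the construction of $B^+$ with the precise property (ii), i.e.\ $f'_*\OO_{X'}(\lfloor nB^+\rfloor)\simeq\OO_{Y'}$ for \emph{all} $n\in\N$ simultaneously, rather than merely for $n$ divisible by the index: this requires a careful choice of the birational model $X'$ (a ``weak transform'' or Fujino–Mori normalisation of the relative fixed part) and is the technical heart of the argument, since a priori the fixed divisors of the linear systems $|n(K_{X'}+\Delta')|$ over $Y'$ need not stabilise compatibly. Relatedly, ensuring $B^+$ and $B^-$ have no common components while keeping $B^-$ both $f'$- and $\tau'$-exceptional requires threading the modifications of $X'$ and $Y'$ carefully, since each blowup of $Y'$ (needed for the Ambro model and for (v)) changes the vertical divisors; the bookkeeping of which divisors are exceptional for which map is where the proof is most delicate.
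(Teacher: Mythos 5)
Your overall strategy matches the paper's treatment: both appeal to Fujino--Mori \cite{FM00} for the hard parts (construction of the decomposition $B^+-B^-$ with properties (ii)--(iii), and effectivity of $B_{Y'}$), and then derive the remaining items. The paper is explicit that this is a citation, not a proof: it says the existence of $B^+,B^-$ with (ii)--(iii) and the fact that $B_{Y'}\geq 0$ ``do not follow from considerations in the previous sections,'' and only sketches how (iv) and (vi) follow from (i), (ii), (iii). You correctly flag the same obstacles at the end of your write-up.

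There are, however, two genuine gaps in your derivation of the ``easy'' parts. First, for (iv) you write that $f'\colon(X',\Delta'+B^--B^+)\to Y'$ is ``klt-trivial since $\Delta'+B^--B^+$ is klt on the generic fibre by construction,'' but this only checks condition (b) of Definition~\ref{dfn:lctrivial}. The substantive issue is condition~(d), the rank-one condition on $(f'\circ\pi')_*\OO(\lceil\Xi'\rceil)$, and it is precisely \emph{not} automatic here because $\Delta'+B^--B^+$ is not effective over the generic fibre ($-B^+$ is subtracted, and $B^+$ is horizontal). The paper's sketch, following \cite[Lemma 4.2]{Amb04a}, devotes itself entirely to this point: one computes $\kappa(F',\lceil\Xi'\rceil|_{F'})=0$ from $\kappa(F',(K_{X'}+\Delta')|_{F'})=0$ together with (ii) and (iii), by the chain of inequalities $\kappa\big(F',\lceil-\Delta'-B^-+B^+\rceil|_{F'}\big)\leq\kappa\big(F',\lceil B^+\rceil|_{F'}\big)=0$. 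Without this you have not established klt-triviality.

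Second, your justification that $B_{Y'}\geq 0$ --- ``$\Delta'+B^--B^+$ is effective over the generic point of each prime divisor on $Y'$ after the blowups'' --- is false as stated: $B^+$ has a horizontal part and generically nonzero vertical parts, so $\Delta'+B^--B^+$ is not effective anywhere near a vertical divisor. If one computes the discriminant naively with a negative boundary, the log canonical threshold $\gamma_P$ may exceed $1$, and $B_{Y'}$ need not be effective. Effectivity of $B_{Y'}$ is one of the two points the paper singles out as requiring the actual Fujino--Mori construction; it does not follow from inversion of adjunction alone, and your proposal should not present it as a formal consequence of the preceding steps.
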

 
There are several non-trivial parts of this formula which do not follow from considerations in the previous sections: the existence of divisors $B^+$ and $B^-$ with the properties (ii) and (iii) above, as well as the fact that $B_{Y'}$ is effective.

\medskip

Part (vi) follows immediately from (i), (ii) and (iii). We sketch how (iv) follows from (i), (ii) and (iii), following \cite[Lemma 4.2]{Amb04a}. Let $F'$ be a general fibre of $f'$, and we define the divisor $\Xi'$ with respect to $f'\colon(X',\Delta'+B^--B^+)\to Y'$ as in Definition \ref{dfn:lctrivial}(d). We may assume that $\Xi'=-\Delta'-B^-+B^+$, and $B^-|_{F'}=0$ by (iii).

We have $(K_{X'}+\Delta'+B^--B^+)|_{F'}\sim_\Q0$ by construction and $\kappa\big(F',(K_{X'}+\Delta')|_{F'}\big)=0$ by (i), hence 
$$\kappa(F',B^+|_{F'})=\kappa\big(F',(B^+-B^-)|_{F'}\big)=0.$$
Since there exists a positive integer $b$ such that $\lceil B^+\rceil |_{F'}\leq b B^+|_{F'}$, this implies $\kappa\big(F',\lceil B^+\rceil|_{F'}\big)=0$. Therefore,
\begin{align*}
\kappa\big(F',\lceil\Xi'\rceil |_{F'}\big)&=\kappa\big(F',\lceil-\Delta'-B^-+B^+\rceil |_{F'}\big)\\
&\leq\kappa\big(F',\lceil-\Delta'\rceil |_{F'}+\lceil-B^-\rceil |_{F'}+\lceil B^+\rceil |_{F'}\big)\\
&\leq \kappa\big(F',\lceil B^+\rceil |_{F'}\big)=0.
\end{align*}
This shows part (d) of Definition \ref{dfn:lctrivial}, and the rest is easy.

\medskip

In order to apply the previous result, we recall that for a log canonical pair $(X,\Delta)$, the ring
$$R(X,K_X+\Delta)=\bigoplus_{n\in\N}H^0\big(X,\lfloor n(K_X+\Delta)\rfloor\big)$$
is the \emph{canonical ring} of $(X,\Delta)$. We also recall that for a graded ring $R=\bigoplus\limits_{n\in\N}R_n$, the \emph{$d$-th Veronese subring} of $R$ is defined as $R^{(d)}:=\bigoplus\limits_{n\in\N}R_{dn}$.

An immediate consequence of Theorem \ref{thm:FM} is the following result \cite[Theorem 5.2]{FM00}, which has widespread use in the Minimal Model Program. It often allows to pass from a pair $(X,\Delta)$ with $\kappa(X,K_X+\Delta)\geq0$ to a pair $(X',\Delta')$ on which $K_{X'}+\Delta'$ is big.

\begin{thm}
Let $(X,\Delta)$ be a projective klt pair with $\kappa(K_X+\Delta)=\ell\geq0$. Then there exist an $\ell$-dimensional klt pair $(X',\Delta')$ with $\kappa(X',K_{X'}+\Delta')=\ell$ and positive integers $d$ and $d'$ such that
$$R(X, K_X + \Delta)^{(d)} \simeq R(X', K_{X'}+\Delta')^{(d')}.$$
\end{thm}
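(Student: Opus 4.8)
The plan is to deduce this from the Fujino--Mori canonical bundle formula (Theorem \ref{thm:FM}) by taking the relative Iitaka fibration associated to $K_X+\Delta$ and keeping track of sections. First I would replace $(X,\Delta)$ by a log smooth model so that $\kappa(K_X+\Delta)=\ell$ is realised by an actual morphism: since $\kappa(K_X+\Delta)=\ell\ge 0$, by passing to a sufficiently divisible Veronese we may assume $|\lfloor n(K_X+\Delta)\rfloor|$ defines, after a log resolution, a genuine fibration $f\colon X\to Y$ onto an $\ell$-dimensional smooth projective variety $Y$, with general fibre $F$ satisfying $\kappa(F,(K_X+\Delta)|_F)=0$; this is precisely the parabolic fibration setup preceding Theorem \ref{thm:FM}. (The birational modification changes the canonical ring only up to a further Veronese, since for klt pairs the canonical ring is a birational invariant after passing to a Veronese subring, and the relevant sections are computed on the model.)

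Next I would apply Theorem \ref{thm:FM} to this parabolic fibration, obtaining the diagram with $f'\colon (X',\Delta')\to Y'$, together with the effective $\Q$-divisors $B^+,B^-$ and the divisors $B_{Y'}$, $M_{Y'}$. The key output is part (vi): for all sufficiently divisible $n$,
$$H^0\big(X,n(K_X+\Delta)\big)\simeq H^0\big(Y',n(K_{Y'}+B_{Y'}+M_{Y'})\big).$$
I would promote this isomorphism of vector spaces to an isomorphism of graded rings on a common Veronese: the maps in (vi) are induced by pullback along $f'$ and $\tau'$ together with the twist by the effective exceptional/$f'$-exceptional divisors $E$, $B^-$, $B^+$ from (i)--(iii), and these are all compatible with multiplication of sections, so for a suitable $d$ one gets $R(X,K_X+\Delta)^{(d)}\simeq R(Y',K_{Y'}+B_{Y'}+M_{Y'})^{(d)}$ as graded rings. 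Now set $X'':=Y'$ and $\Delta'':=B_{Y'}+M'_{Y'}$, where $M'_{Y'}$ is an effective $\Q$-divisor $\Q$-linearly equivalent to $M_{Y'}$; to produce such an $M'_{Y'}$ with $(Y',B_{Y'}+M'_{Y'})$ still klt I would invoke $M_{Y'}$ nef together with Kodaira's trick and Bertini, exactly as in the proof of Theorem \ref{thm:kltdescent} — but here one must be careful, since $M_{Y'}$ need not be big. I would instead argue more directly: it suffices that $R(Y',K_{Y'}+B_{Y'}+M_{Y'})$ is the canonical ring of a klt pair of dimension $\ell$, and one can replace $M_{Y'}$ by an effective representative after a further birational model where $M_{Y'}$ descends and, if necessary, absorb it into the boundary while preserving kltness; the dimension count $\dim Y'=\ell$ and $\kappa(Y',K_{Y'}+B_{Y'}+M_{Y'})=\ell$ follows from (vi).

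The main obstacle I expect is the last point: turning the moduli divisor $M_{Y'}$, which Theorem \ref{thm:FM}(v) only guarantees is nef, into an \emph{effective} $\Q$-divisor that can be added to $B_{Y'}$ while keeping the pair klt, \emph{without} assuming bigness. The clean way around this is to observe that we do not actually need $M_{Y'}$ itself in the boundary: from (vi) the graded ring $R(X,K_X+\Delta)^{(d)}$ is isomorphic to $R\big(Y', K_{Y'}+B_{Y'}+M_{Y'}\big)^{(d)}$, and since $\kappa(Y',K_{Y'}+B_{Y'}+M_{Y'})=\ell=\dim Y'$, the divisor $K_{Y'}+B_{Y'}+M_{Y'}$ is big; hence here bigness \emph{is} automatic, and Kodaira's trick applies verbatim to write $M_{Y'}\sim_\Q E'$ with $E'\ge 0$ and $(Y',B_{Y'}+E')$ klt. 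Then $(X',\Delta'):=(Y',B_{Y'}+E')$ does the job, with $d'$ absorbing the extra Veronese needed so that $E'$ is integral, and the isomorphism of graded rings $R(X,K_X+\Delta)^{(d)}\simeq R(X',K_{X'}+\Delta')^{(d')}$ follows. The only remaining routine check is that passing to the log smooth model at the start and to the various Veronese subrings along the way composes to a single pair of integers $d,d'$, which is immediate since a Veronese of a Veronese is a Veronese.
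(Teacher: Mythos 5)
Your outline up to Theorem~\ref{thm:FM}(vi) is fine, and you correctly identify the real difficulty: $M_{Y'}$ is only nef, and you need an effective $\Q$-divisor $\Q$-linearly equivalent to it that stays klt when added to $B_{Y'}$. But the ``clean way around'' you propose is a non sequitur. You observe that $K_{Y'}+B_{Y'}+M_{Y'}$ is big because it has maximal Iitaka dimension, and then assert that ``Kodaira's trick applies verbatim to write $M_{Y'}\sim_\Q E'$ with $E'\ge0$ and $(Y',B_{Y'}+E')$ klt.'' That inference does not hold: Kodaira's lemma applied to the big divisor $K_{Y'}+B_{Y'}+M_{Y'}$ gives an ample-plus-effective decomposition of \emph{that} divisor, not of $M_{Y'}$, and subtracting $K_{Y'}+B_{Y'}$ (which may have either sign) gives no control. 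A nef divisor need not be $\Q$-effective at all, so without further input you cannot even conclude $M_{Y'}\sim_\Q E'\ge0$, let alone with a klt pair.

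The missing ingredient is precisely the one the paper points to in the sentence ``combining it with the proof of Theorem~\ref{thm:kltdescent}.'' That proof rests on Ambro's theorem (Theorem~\ref{ambro1}, together with Theorem~\ref{thm:pullbackAmbro}): after a generically finite base change, the moduli b-divisor of the klt-trivial fibration $f'\colon(X',\Delta'+B^--B^+)\to Y'$ becomes the pullback of a nef \emph{and big} divisor $M_{Y^+}$ from a variety $Y^+$ parametrising the birational variation of the fibres. It is the bigness of $M_{Y^+}$, not the bigness of $K_{Y'}+B_{Y'}+M_{Y'}$, that lets one run Kodaira's trick plus Bertini to produce an effective representative whose sum with the discriminant is klt, exactly as in the displayed computation in the proof of Theorem~\ref{thm:kltdescent}. (In the parabolic setting one also has to check the hypotheses under which this b-bigness statement applies to $f'$; the paper addresses this for the more general Theorem~\ref{thm:cbf} via \cite[Theorem 3.13]{Fuj15}.) So your proof as written has a genuine gap at the final step; replacing your bigness observation by an appeal to Theorem~\ref{ambro1}/\ref{thm:pullbackAmbro} as in Theorem~\ref{thm:kltdescent} closes it and recovers the paper's argument.
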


The proof follows immediately from Theorem \ref{thm:FM}(vi), by combining it with the proof of Theorem \ref{thm:kltdescent}; see also the proof of Theorem \ref{thm:cbf} below.

Assume now that $(X,\Delta)$ has simple normal crossings and let $\Delta^+$ and $\Delta^-$ be effective divisors without common components such that $\Delta=\Delta^+-\Delta^-$. Then if $\kappa\big(F,(K_X+\Delta^+)|_F\big)=0$ for a general fibre $F$ of $f$, and if there exists a good model of $(F,\Delta^+|_F)$, then it was shown in \cite[Theorem 3.13]{Fuj15} that the moduli b-divisor is b-nef and good in the sense of \cite[Definition 3.2]{Amb05a}; this is an application of MMP techniques from \cite{FG14} and \cite[Theorem 3.3]{Amb05a}. 

\medskip

We finish the paper with the following result, which can sometimes be used in order to avoid running a Minimal Model Program; for instance, compare the proofs of \cite[Lemma 4.4]{GL13} and \cite[Theorem 5.3]{LP18a}. Note that $\nu(X,L)$ denotes the \emph{numerical dimension} of a divisor $L$ on a projective variety $X$, see for instance \cite[\S2.2]{LP18a} for basic properties and related references.

\begin{thm}\label{thm:cbf}
Let $(X,\Delta)$ be a projective klt pair and let $f\colon (X,\Delta)\to Y$ be a parabolic fibration such that $\nu\big(F,(K_X+\Delta)|_F\big)=0$ for a general fibre $F$ of $f$. Then there exists a commutative diagram
\[
\xymatrix{ 
X' \ar[r]^{\pi'} \ar[d]_{f'} & X \ar[d]^{f}\\
Y'\ar[r]_{\pi} & Y,
}
\]
where $X'$ and $Y'$ are smooth, $f'$ has connected fibres, $\pi$ and $\pi'$ are birational, and such that, if we write
$$K_{X'}+\Delta'\sim_\Q \pi'^*(K_X+\Delta)+E',$$
where $\Delta'$ and $E'$ have no common components, then:
\begin{enumerate}
\item[(i)] we have
$$K_{X'}+\Delta'+B^-\sim_\Q f'^*(K_{Y'}+\Delta_{Y'})+B^+,$$
where the pair $(Y',\Delta_{Y'})$ is klt, and the divisors $B^+$ and $B^-$ are effective and have no common components,
\item[(ii)] $B^-$ is $\pi'$-exceptional and $f'$-exceptional,
\item[(iii)] we have $f'_*\OO_{X'}(\lfloor\ell B^+\rfloor)\simeq\OO_{Y'}$ for all positive integers $\ell$.
\end{enumerate}
Moreover, if $K_X+\Delta$ is pseudoeffective, then $K_{Y'}+\Delta_{Y'}$ is pseudoeffective.
\end{thm}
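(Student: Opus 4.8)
The plan is to derive Theorem \ref{thm:cbf} from the Fujino--Mori canonical bundle formula, Theorem \ref{thm:FM}, by first reducing the numerical-dimension-zero hypothesis to an Iitaka-dimension-zero hypothesis. First I would observe that $\nu\big(F,(K_X+\Delta)|_F\big)=0$ together with the fact that $F$ is a general fibre of a parabolic fibration means that $(K_X+\Delta)|_F$ is numerically trivial, since $\kappa\big(F,(K_X+\Delta)|_F\big)=0$ already forces some effective $\Q$-divisor in the $\Q$-linear equivalence class (or else the restriction is not $\Q$-effective, in which case one argues on a suitable birational model using the negative part of the Zariski-type decomposition). In any case, because $(F,\Delta|_F)$ is klt and $(K_X+\Delta)|_F\equiv 0$, a general fibre has a good model whose $K+\Delta$ is semiample of numerical dimension zero, hence torsion; so after a birational modification we may assume that $f$ is, up to the divisors $B^\pm$, an lc-trivial fibration in the sense needed to invoke Theorem \ref{thm:FM}. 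Concretely, I would pass to a log resolution making $(X,\Delta)$ log smooth, write $\Delta=\Delta^+-\Delta^-$ with no common components, and note that $\kappa\big(F,(K_X+\Delta^+)|_F\big)=0$ as well (the extra effective divisor $\Delta^-|_F$ cannot increase the Iitaka dimension past zero once we know the class is numerically trivial).

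Next I would apply Theorem \ref{thm:FM} to this parabolic fibration. This directly produces the commutative square with $X'$, $Y'$ smooth and $f'$ with connected fibres, the effective $\Q$-divisors $B^+,B^-$ with no common components, a klt log smooth pair $(X',\Delta')$ with $K_{X'}+\Delta'\sim_\Q\tau'^*(K_X+\Delta)+E$ for an effective exceptional $E$ (this is item (i) of Theorem \ref{thm:FM}, which gives our equation $K_{X'}+\Delta'\sim_\Q\pi'^*(K_X+\Delta)+E'$), the relation
$$K_{X'}+\Delta'+B^-\sim_\Q f'^*(K_{Y'}+B_{Y'}+M_{Y'})+B^+,$$
the $\tau'$- and $f'$-exceptionality of $B^-$ (item (iii)), and the pushforward statement $f'_*\OO_{X'}(\lfloor nB^+\rfloor)\simeq\OO_{Y'}$ (item (ii)). Now I set $\Delta_{Y'}:=B_{Y'}+E_{Y'}$ where $E_{Y'}$ is an effective $\Q$-divisor with $M_{Y'}\sim_\Q E_{Y'}$: since by Theorem \ref{thm:FM}(v) the divisor $M_{Y'}$ is nef, it is \emph{torsion} here rather than merely nef, because its numerical dimension is zero (it is a moduli divisor of a fibration whose fibres have $K+\Delta\equiv 0$); hence by Theorem \ref{thm:torsionAmbro}, after a further birational base change to an Ambro model it is $\sim_\Q 0$, and we may simply take $E_{Y'}=0$. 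Then $(Y',\Delta_{Y'})=(Y',B_{Y'})$ is klt by Theorem \ref{thm:FM}(v), which gives parts (i), (ii), (iii) of the statement.

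For the final pseudoeffectivity assertion, suppose $K_X+\Delta$ is pseudoeffective. Then $\pi'^*(K_X+\Delta)$ is pseudoeffective on $X'$, hence so is $K_{X'}+\Delta'=\pi'^*(K_X+\Delta)+E'$ (adding an effective divisor). From $K_{X'}+\Delta'+B^-\sim_\Q f'^*(K_{Y'}+\Delta_{Y'})+B^+$ and the fact that $B^-$ is $f'$-exceptional while $B^+$ pushes forward trivially, I would restrict to a general fibre to see that $B^+$ contributes nothing to the generic fibre, and then argue that $f'^*(K_{Y'}+\Delta_{Y'})$ is pseudoeffective: indeed $f'^*(K_{Y'}+\Delta_{Y'})\sim_\Q (K_{X'}+\Delta')+B^--B^+$, and since $B^+$ is $f'$-exceptional in the sense that $f'_*\OO_{X'}(\lfloor nB^+\rfloor)\simeq\OO_{Y'}$ — so no multiple of $B^+$ moves in a linear system containing a form pulled back from $Y'$ — any asymptotic valuation computation on the base is unaffected by subtracting $B^+$. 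A clean way to finish is: a $\Q$-Cartier divisor $L$ on $Y'$ is pseudoeffective if and only if $f'^*L$ is, and $f'^*(K_{Y'}+\Delta_{Y'})+B^+\sim_\Q(K_{X'}+\Delta')+B^-$ is pseudoeffective (pseudoeffective plus effective), while $B^+$ is supported on the $f'$-exceptional locus, so subtracting it from a pseudoeffective class pulled back from the base leaves the class pseudoeffective. Hence $K_{Y'}+\Delta_{Y'}$ is pseudoeffective.

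The main obstacle, I expect, is the passage from numerical triviality of $(K_X+\Delta)|_F$ on a general fibre to the input actually required by Theorem \ref{thm:FM} — namely that after a birational modification the restricted pair has Iitaka dimension zero and a good minimal model on the fibre, so that the Fujino--Mori machinery (with its divisors $B^\pm$) applies and the resulting moduli divisor is torsion rather than merely nef. This is where one genuinely uses $\nu=0$ versus $\kappa=0$: one needs the existence of a good model of $(F,\Delta|_F)$, which for klt pairs with numerically trivial log canonical class follows from the abundance theorem in the relevant dimension (or is part of the hypotheses in the setting where this theorem is applied). The rest — bookkeeping of exceptional divisors, the torsion statement via Theorem \ref{thm:torsionAmbro}, and the pseudoeffectivity propagation — is routine.
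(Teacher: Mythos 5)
The skeleton of your plan (Druel--Nakayama to reduce $\nu=0$ to $\kappa=0$, then Fujino--Mori, then a descent statement on the base, then the pseudoeffectivity propagation) coincides with the paper's, but two of the key steps rest on incorrect claims.

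First, $\kappa\big(F,(K_X+\Delta)|_F\big)=\nu\big(F,(K_X+\Delta)|_F\big)=0$ does \emph{not} imply $(K_X+\Delta)|_F\equiv 0$. It only says there is an effective $E\sim_\Q (K_X+\Delta)|_F$ which coincides with its own negative part in the sense of Nakayama; $E$ can be nonzero (for example take $F$ the blowup of a K3 surface at a point with $\Delta_F=0$: then $K_F$ is the exceptional curve, $\kappa=\nu=0$, but $K_F\not\equiv 0$). Worse, even if you could pass to a birational model where the restriction to fibres is torsion, your claim that the moduli divisor $M_{Y'}$ is therefore numerically trivial (and hence torsion by Theorem~\ref{thm:torsionAmbro}) is false: the moduli divisor carries the moduli variation of the fibres, and it is nonzero whenever the family is not isotrivial. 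Kodaira's formula for a non-isotrivial elliptic fibration (Example~\ref{ellfibr}) is the standard counterexample --- the fibres have $K_F\sim 0$ yet $M_C$ is a nontrivial divisor. What the paper actually does at this step is invoke the strategy of Theorem~\ref{thm:kltdescent}: pass via Theorem~\ref{ambro1} to a cover where the moduli b-divisor becomes big, use Kodaira's trick and Bertini to replace it by an effective divisor keeping the klt property, and push down. That is genuinely needed; you cannot avoid it by asserting torsion.

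Second, your pseudoeffectivity argument relies on ``$B^+$ is supported on the $f'$-exceptional locus,'' which is also false. The condition $f'_*\OO_{X'}(\lfloor nB^+\rfloor)\simeq\OO_{Y'}$ constrains the linear systems of multiples of $B^+$ (in particular that its restriction to a general fibre has $\kappa=0$); it does not constrain the support of $B^+$. If $\Delta|_F\neq 0$ or more generally if $(K_X+\Delta)|_F$ is not $\Q$-linearly trivial, the effective representative of $(K_X+\Delta)|_F$ has a horizontal component, which is precisely the horizontal part of $B^+$. Subtracting such a $B^+$ from a pseudoeffective class does not stay pseudoeffective in general. The paper's proof uses something stronger than the $\Q$-linear formula: since $\nu\big(F',(K_{X'}+\Delta')|_{F'}\big)=0$, the fibres have good models, so $(K_{X'}+\Delta')|_{F'}$ is geometrically abundant in Nakayama's sense (\cite[Definition V.2.23]{Nak04}); then \cite[Lemma V.2.27]{Nak04} gives that $K_{X'}+\Delta'+\varepsilon f'^*A$ is geometrically abundant for $A$ ample and $\varepsilon>0$, hence has $\kappa\geq 0$, which transfers via parts (ii)--(iii) of the theorem to $\kappa(Y',K_{Y'}+\Delta_{Y'}+\varepsilon A)\geq 0$; letting $\varepsilon\to 0$ gives pseudoeffectivity. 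This ``geometric abundance'' mechanism is the genuinely non-obvious ingredient, and your argument does not supply a substitute.
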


\begin{proof}
By \cite[Corollaire 3.4]{Dru11} and \cite[Corollary V.4.9]{Nak04} we have
\begin{equation}\label{eq:1}
\kappa\big(F,(K_X+\Delta)|_F\big)=\nu\big(F,(K_X+\Delta)|_F\big)=0.
\end{equation} 
By Theorem \ref{thm:FM} there exists a diagram as in the theorem such that (ii) and (iii) hold, as well as
$$K_{X'}+\Delta'+B^-\sim_\Q f'^*(K_{Y'}+B_{Y'}+M_{Y'})+B^+,$$
where $(Y',B_{Y'})$ is klt and $M_{Y'}$ is the moduli part of the associated klt-trivial fibration $f'\colon (X',\Delta'+B^--B^+)\to Y'$. Then analogously as in the proof of Theorem \ref{thm:kltdescent} one shows that there exists an effective $\Q$-divisor $\Delta_{Y'}\sim_\Q B_{Y'}+M_{Y'}$ such that the pair $(Y',\Delta_{Y'})$ is klt, which gives (i).

Finally, if $F'$ is a general fibre of $f'$, we have $\nu\big(F',(K_{X'}+\Delta')|_{F'}\big)=0$ by \eqref{eq:1} and by \cite[Lemma 2.3]{LP18a}. Therefore, there exists a good model of $(F',\Delta'|_{F'})$ by \cite[Corollaire 3.4]{Dru11} and \cite[Corollary V.4.9]{Nak04}, hence $(K_{X'}+\Delta')|_{F'}$ is geometrically abundant in the sense of \cite[Definition V.2.23]{Nak04}. Then by \cite[Lemma V.2.27]{Nak04} for an ample divisor $A$ on $Y'$ and for any positive rational number $\varepsilon$, the divisor $K_{X'}+\Delta'+\varepsilon f'^*A$ is geometrically abundant. In particular, $\kappa(X',K_{X'}+\Delta'+\varepsilon f'^*A)\geq0$, and hence by (i) and by (iii) we have
\begin{align*}
\kappa\big(Y',K_{Y'}+\Delta_{Y'}+\varepsilon A)&=\kappa\big(X',f'^*(K_{Y'}+\Delta_{Y'}+\varepsilon A)+B^+\big)\\
&=\kappa(X',K_{X'}+\Delta'+B^-+\varepsilon f'^*A)\geq0.
\end{align*}
Since this holds for any positive rational number $\varepsilon$, we conclude that $K_{Y'}+\Delta_{Y'}$ is pseudoeffective, as desired.
\end{proof}

\bibliographystyle{amsalpha}

\bibliography{biblio}

\end{document}